\NeedsTeXFormat{LaTeX2e}
[1994/06/01]


\documentclass{amsart}
\usepackage{amsthm}
\usepackage{amsmath,amsxtra,amssymb,latexsym, amscd,amsthm}
\usepackage[all]{xy}
\usepackage{mathrsfs}
\usepackage{enumerate}
\newtheorem{thm}{Theorem}[section]
\newtheorem{cor}[thm]{Corollary}
\newtheorem{prop}[thm]{Proposition}
\newtheorem{lem}[thm]{Lemma}
\theoremstyle{definition}

\newtheorem{exam}{Example}[section]  

\textwidth=13.7cm
\theoremstyle{remark}
\newtheorem{rmk}{Remark}[section]  
\newtheorem{\theequation}{}[section]

\renewcommand{\theequation}{\thesection.\arabic{equation}}
\usepackage{amsfonts}

\newcommand{\C}{\field{C}}

\def\2ovlOB{\overline {\overline \Omega}_B}
\def\1ovlO{\overline \Omega}

\def\fun{function}

\def\dlim{\displaystyle\lim}

\def\C{{\Bbb C}}

\def\P{{\Bbb P}}

\def\N{{\Bbb N}}

\begin{document}

\title[FR\'{E}CHET-VALUED FORMAL POWER SERIES]{FR\'{E}CHET-VALUED   FORMAL POWER SERIES}

\author{Thai Thuan Quang} 
\address{Department of Mathematics, 
Quy Nhon University,
170 An Duong Vuong, Quy Nhon, Binh Dinh, Vietnam.}
\email{thaithuanquang@qnu.edu.vn} 


%
%
\date{January 10, 2019}
\keywords{Plurisubharmonic functions, Holomorphic functions, Projectively pluripolar sets, Formal power series}
\subjclass[2010]{46G20, 31C10, 46E50,  32A10, 16W60}
\thanks{The research was supported by the National Foundation for Science and Technology Development (NAFOSTED), Vietnam, 101.02-2017.304}
\maketitle

\bigskip
\begin{abstract} Let $A$ be a non-projectively-pluripolar set in a Fr\'{e}chet space $E.$   We give  sufficient conditions to ensure the convergence  on some zero-neighbourhood in  $E$   of a (sequence of) formal power series of Fr\'{e}chet-valued continuous homogeneous polynomials provided that the convergence holds at a zero-neighbourhood of each complex line $\ell_a := \C a$ for every $a \in A.$
\end{abstract}

\bigskip
\section{Introduction}
In mathematics,  a formal power series is a  generalization of  polynomials as a formal object, where the number of terms is allowed to be infinite. 
The theory of formal power series   has drawn attention of mathematicians working in different branches because of their various applications. One can find applications of formal power series in classical mathematical analysis  and in the theory of Riordan algebras. Specially, this theory    lays the foundation for substantial parts of combinatorics and real and complex analysis. 

A formal power series $f(z_1, \ldots, z_n) = \sum c_{\alpha_1, \ldots, \alpha_n}z_1^{\alpha_1}\ldots z_n^{\alpha_n}$ in $\C^n,$ $n \ge 2,$ with coefficients in  $\C$ is said to be convergent if it converges absolutely in a zero-neighborhood  in $\C^n.$ A classical result of Hartogs   states that a series $f$ converges if and only if  $f_z(t) = f(tz_1, \ldots, tz_n)$ 
converges, as a series in $t,$ for all   $z \in \C^n.$ This can be interpreted as a formal analog
of Hartogs’ theorem on separate analyticity. Because a divergent power series still
may converge in certain directions, it is natural and desirable to consider the set of all
$z \in \C^n$ for which  $f _z$ converges. Since  $f_z (t)$ converges if and only if  $f_w (t)$ converges for all  $w \in \C^n$ on the affine line through $z,$  ignoring the trivial case  $z = 0,$ the set of directions along which $f$ converges can be identified with a subset of the projective space $\C\P^{n-1}.$  The convergence set  $\text{Conv}(f)$ of a divergent power series $f$ is defined to be the set of all directions $\xi \in \C\P^{n-1}$  such that  $f_z (t)$ is convergent for some  $z \in \varrho^{-1}(\xi)$ where $\varrho: \C^n \setminus \{0\} \to \C\P^{n-1}$  is the natural projection. In the two-variables case, Lelong \cite{Le} proved that $\text{Conv}(f)$ is an  $F_\sigma$-polar set (i.e. a countable union of closed sets of vanishing logarithmic capacity) in  $\C\P^1,$  and moreover, every  $F_\sigma$-polar subset of  $\C\P^1$ is contained in the $\text{Conv}(g)$ of some formal power series $g .$ The optimal result was later obtained by Sathaye \cite{Sa} who showed that the class of convergence sets of divergent power series in two-variables    is precisely the class of   $F_\sigma$-polar sets in  $\C\P^1.$ Levenberg  and Molzon, in \cite{LM},   showed that if the restriction of  $f $ on sufficiently many (non-pluripolar) sets of complex line passing through the origin is convergent on small neighborhood of $0  \in \C$ then $f$ is actually represent a holomorphic function near $0 \in \C^n.$ By using   delicate estimates on volume of complex varieties in projective spaces, Alexander \cite[Theorem 6.1]{Al}  showed that if the restriction of a sequence $(f_m)_{m \ge 1}$ of formal power series on every complex line passing through the origin in $\C^n$ is  convergent on compact sets (of the unit disk $\Delta \subset \C$) then     {$(f_m)_{m \ge 1}$ is the series of holomorphic function on the unit ball $\Delta_n \in \C^n$ which is convergent uniformly on compact sets.  By considering the class $PSH_\omega(\P^n)$ of  $\omega$-plurisubharmonic functions on  $\P^n$ with respect  to the form $\omega := dd^c\log|Z|$ on $\P^n,$   Ma   and   Neelon proved that a countable union of closed complete pluripolar sets in  $\P^n$ belongs to $\text{Conv}(\P^n).$ This generalizes the results of Lelong \cite{Le}, Levenberg and Molzon \cite{LM}, and Sathaye \cite{Sa}. In the same work, they also showed that each convergence set (of divergent power series) is a countable union of projective hulls of compact pluripolar sets. Recently, based on an investigation on a projectively pluripolar subset of $\C^n$ (via logarithmically homogeneous  plurisubharmonic function) Long and Hung \cite{LH} have shown that   a sequence $(f_m)_{m \ge 1}$ of formal power series in $\C^n$ converges uniformly on compact subsets of the ball $\Delta_n(r_0) \subset \C^n$ for some $r_0 > 0$ if for each $m \ge 1,$ the restriction of  $f_m$ to the complex line $\ell_a := \C a$ is holomorphic on the disk $\Delta(r_0) \subset \C$ for every $a \in A, $  a non-projectively-pluripolar set in $\C^n.$

 The main goal of this paper is to study the convergence of a  (sequence of)  formal power series of Fr\'{e}chet-valued continuous homogeneous polynomials. To prepear for the proofs of the main results, with the help of  techniques of pluripotential theory, we investigate  the Hartogs lemma for sequence of plurisubharmonic functions for the infinite dimensional case (Theorem \ref{thm_2}).  The first main result, Theorem \ref{thm_6}, gives a condition on a non-projectively-pluripolar set $A$ in a Fr\'{e}chet space $E$ such that a formal power series $f$ of  Fr\'{e}chet-valued continuous homogeneous polynomials of degree $n$ on $E$   converges in a neighbourhood of $0 \in E$ provided that  the restriction of $f$ on the complex line $\ell_a$ is convergent for every $a \in A.$ Theorem \ref{thm_2} also allows us to treat the problem on the extension to a entire function from the unit ball $\Delta_n \subset \C^n$ of a $C^\infty$-smooth function $f.$ Naturally, the condition ``\textit{for a non-projectively-pluripolar set $A,$ the restriction of $f$ on every $\ell_a,$ $a \in A,$  is  entire function}'' will be considered here (Theorem \ref{thm_7}). This result may be considered as a Fr\'{e}chet-valued version of the Forelli   theorem \cite{Sh}.

The problem considered in the last result, Theorem \ref{thm_8}, is giving  the conditions under which a sequence  of formal power series of Fr\'{e}chet-valued continuous homogeneous polynomials on $\C^n$  converges on a zero-neighbourhood. 
Another expression of this theorem will show that this is an extension of Alexander's theorem for the Fr\'{e}chet-valued case.

\section{Preliminaries}  
The standard notation of the theory of locally convex spaces   used in this note    is presented as  in the book of Jarchow \cite{Ja}. 
A locally convex space is always a complex vector space with a locally convex Hausdorff topology.
For a locally convex  space $E$ we use  $E'_{\rm bor}$ to denote  $E'$ equipped with the bornological topology associated with the strong topology $\beta.$ 

The locally convex structure of a Fr\'{e}chet space  is always assumed to be    generated by an increasing system $(\|\cdot\|_k)_{k \ge 1}$ of seminorms. For an absolutely convex subset $B$ of $E,$  by $E_B$ we denote the linear hull of $B$ which becomes a normed space in a canonical way if $B$ is bounded (with the norm $\|\cdot\|_B$ is the gauge \fun al of $B$).

Let  $D$ be a domain in a locally convex space $E.$  An  upper-semicontinuous function  $\varphi: D \to [-\infty, +\infty)$  is said to be \textit{plurisubharmonic}, and write $\varphi \in PSH(D),$ if $\varphi$ is subharmonic on every one dimensional section of $D.$

A subset $B \subset D$ 
is said to be   \textit{pluripolar} in $D$ if there exists  $\varphi \in PSH(D)$ such that $\varphi \not\equiv -\infty$ and $\varphi\big|_B = -\infty.$

A function $\varphi \in PSH(E)$ is called \textit{homogeneous plurisubharmonic} if
$$ \varphi(\lambda z) = \log|\lambda| + \varphi(z)\quad \forall \lambda \in \C, \ \forall z \in E . $$
We denote by $HPSH(E)$ the set of homogeneous plurisubharmonic functions on $E.$ 
We say that a subset $A \subset E$ is \textit{projectively pluripolar} if $A$ is contained in the $-\infty$ locus of some element in $HPSH(E)$ which is not identically $-\infty.$
It is clear that projective pluripolarity implies pluripolarity. The converse is not true (see \cite[Proposition 3.2 b]{LH}).

  Some properties,  examples and counterexamples of projectively pluripolar sets may be found in \cite{LH}.  We introduce below a few examples    in locally convex  spaces.
\begin{exam}\label{ex_1} Let $E$ be a metrizable locally convex space. Fix $a \in E.$ Then, the complex line $\ell_a,$ hence, every $A \subset \ell_a,$  is  projectively pluripolar in $E.$

Indeed, let $d$ be the metric defining the topology on $E.$ Consider the function
$$ \varphi(z) = -\log d(z, \ell_a) := -\log\inf_{w \in \ell_a}d(z, w).$$
 It  is easy to check that $\varphi \in HPSH(E),$ $\varphi \not\equiv -\infty$ and $\ell_a \subset \varphi^{-1}(-\infty).$
\end{exam}
\begin{exam} Let $E$ be a Fr\'{e}chet space which contains a non-pluripolar compact balanced convex subset $B.$ By the same proof as in Example \ref{ex_1}, the set $\partial B$ is pluripolar. However, $\partial B$ is not projectively pluripolar in $E.$

Otherwise, we can find a function $\varphi \in HPSH(E),$ $\varphi \not\equiv -\infty$ and $\partial B \subset \varphi^{-1}(-\infty).$ For every $z \in B$ we can write $z = \lambda y$ for some $y \in \partial B$ and $|\lambda| < 1.$ Then 
$$ \varphi(z) = \varphi(\lambda y) = \log|\lambda| + \varphi(y) = -\infty\quad\forall z \in B. $$
It is impossibe because $B$ is non-pluripolar.
\end{exam}
Hence, by Theorem 9 of \cite{DMV}, a nuclear Fr\'{e}chet space having  the linear topological invariant $(\widetilde\Omega)$ which is  introduced  by Vogt (see \cite{Vo}) contains a non-projectively-pluripolar set.


We use throughout this paper the following notations:
$$ \Delta_n(r) = \{z \in \C^n:\ \|z\| < r\}; \quad \Delta_n := \Delta_n(1);\quad 
\Delta(r) = \Delta_1(r);\quad \Delta := \Delta_1;  $$
and $\ell_a$ is the complex line $\C a.$ For further terminology from complex analysis we refer to \cite{Di}.
\section{The results}

First we investigate the Hartogs lemma for sequence of plurisubharmonic functions for the infinite dimensional case. This   is essential to our proofs.
\begin{lem}\label{lem_1} Let   $(P_n)_{n \ge 1}$ be a sequence of continuous homogeneous polynomials on   a Baire locally convex space $E$ of degree $\le n.$ Assume that
$$ \limsup_{n \to \infty}\frac 1n\log|P_n(z)|  \le M  $$
for each $z \in E$  and  some  constant  $M.$ Then for every $\varepsilon > 0$ and every compact set $K$ in $E$ there exists $n_0$ such that
$$ \frac 1n\log|P_n(z)| < M + \varepsilon \quad \forall n > n_0, \ \forall z \in K. $$
\end{lem}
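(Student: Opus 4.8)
The plan is to mimic the classical finite-dimensional proof of the Hartogs lemma, using the Baire category property of $E$ to replace the compactness arguments that are available in $\C^n$. Set $\varphi_n(z) := \frac{1}{n}\log|P_n(z)|$; each $\varphi_n$ is plurisubharmonic on $E$ (a logarithm of the modulus of a continuous homogeneous polynomial), and the hypothesis says $\limsup_n \varphi_n(z) \le M$ pointwise. Fix $\varepsilon > 0$ and a compact set $K \subset E$. The goal is a uniform bound $\varphi_n(z) < M + \varepsilon$ on $K$ for all large $n$.

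First I would localize: by homogeneity, $P_n(\lambda z) = \lambda^{d_n} P_n(z)$ where $d_n \le n$ is the degree, so $\varphi_n(\lambda z) = \frac{d_n}{n}\log|\lambda| + \varphi_n(z)$. This lets me rescale $K$ into a small balanced neighbourhood once I have control on one fixed set; more importantly it shows the bad set on which $\varphi_n$ stays large is a cone. Next comes the Baire step. For $j \ge 1$ set
$$ F_j := \bigcap_{n \ge j} \{ z \in E : \varphi_n(z) \le M + \tfrac{\varepsilon}{2} \}. $$
Each $F_j$ is closed (each $\varphi_n$ is upper semicontinuous), and by the $\limsup$ hypothesis $E = \bigcup_j F_j$. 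Since $E$ is Baire, some $F_{j_0}$ has nonempty interior, hence contains a set of the form $z_0 + U$ with $U$ a balanced convex $0$-neighbourhood. Thus $\varphi_n(z_0 + u) \le M + \varepsilon/2$ for all $n \ge j_0$ and all $u \in U$.

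The core of the argument is to upgrade this uniform bound on a translate of a neighbourhood to a uniform bound on the compact set $K$. Here I would run the standard sub-mean-value / disc argument along complex lines. Given $z \in K$, consider the entire function of one variable $t \mapsto P_n(z_0 + t(z - z_0))$ — actually, since one wants the homogeneity to bite, it is cleaner to work with the disc $t \mapsto z_0 + t w$ for $w$ ranging over a bounded set and apply the Hadamard three-circles / Borel–Carathéodory type estimate: a plurisubharmonic function bounded above by $M + \varepsilon/2$ on $z_0 + U$ and dominated by the crude a priori bound coming from the $\limsup$ can be shown to be $\le M + \varepsilon$ on a slightly smaller scaled set, uniformly. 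Concretely, for each $z \in K$ write $z = z_0 + (z - z_0)$; choose $\rho > 1$ so that the closed disc $\{ z_0 + t(z-z_0) : |t| \le \rho \}$ still lies in $z_0 + U$ — this is possible uniformly in $z \in K$ because $K - z_0$ is bounded and $U$ absorbing, so a single $\rho$ works. Then $\psi_n(t) := \varphi_n(z_0 + t(z-z_0))$ is subharmonic on $|t| < \rho$, bounded above by $M + \varepsilon/2$ there, and the sub-mean-value inequality on circles, together with the fact that $d_n/n \le 1$ controls how much the value at $t = 1$ can exceed the circle average, gives $\psi_n(1) \le M + \varepsilon$ for $n$ large, uniformly in $z$. (The slack $\varepsilon/2 \to \varepsilon$ absorbs the homogeneity correction term $\frac{d_n}{n}\log\rho \le \log\rho$ once $\rho$ is chosen close enough to $1$; one may need to iterate or rescale $K$ first so that the relevant disc radius can be taken as close to $1$ as desired.)

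The main obstacle I anticipate is precisely this last passage: in $\C^n$ one uses compactness of the sphere plus a volume/mean-value estimate to convert an interior bound into a uniform bound, and here $K$ is compact but the ambient space is infinite-dimensional, so I must be careful that the disc radius $\rho$ and the number $n_0$ can be chosen uniformly over $z \in K$. The resolution is that $K - z_0$ is a bounded subset of $E$ and $U$ is absorbing, hence $K - z_0 \subset \rho^{-1} U$ for a single $\rho > 1$ when $U$ is a suitable neighbourhood shrunk slightly — so the one-variable estimate is genuinely uniform. A secondary technical point is ensuring $F_{j_0}$ contains an open set rather than merely being nonmeager; this is exactly where the Baire hypothesis on $E$ is used, and no compactness of $E$ is needed. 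Assembling these pieces yields the stated $n_0$ depending only on $\varepsilon$ and $K$.
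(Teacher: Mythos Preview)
Your Baire step is fine and indeed appears in the paper's argument as well, but the passage from the bound on $z_0 + U$ to the bound on an arbitrary compact $K$ has a genuine gap. The claim ``$K - z_0$ is bounded and $U$ is absorbing, hence $K - z_0 \subset \rho^{-1} U$ for a single $\rho > 1$'' is simply false: absorbing means $K - z_0 \subset \lambda U$ for some \emph{large} $\lambda$, not that $K - z_0$ fits inside $U$ (let alone inside a shrunk copy of $U$). The neighbourhood $U$ is handed to you by Baire category; you have no control over its size relative to $K$, and shrinking $U$ only makes things worse. Consequently the disc $\{z_0 + t(z - z_0): |t|\le\rho\}$ lies in $z_0+U$ only for $\rho \le 1/p_U(z-z_0)$, which is typically $<1$. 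Running the one-variable polynomial estimate correctly (Bernstein--Walsh on the disc of radius $1/p_U(z-z_0)$, using $\deg_t P_n(z_0+t(z-z_0)) \le d_n \le n$) gives only
\[
\varphi_n(z) \le M + \tfrac{\varepsilon}{2} + \tfrac{d_n}{n}\log^+ p_U(z - z_0),
\]
and the last term is a fixed positive constant on $K$, not something you can absorb into the remaining $\varepsilon/2$. Your ``iterate or rescale $K$'' remark does not repair this: rescaling $K$ changes the statement you are proving, and iterating the Baire argument with a smaller threshold produces a new, possibly much smaller $U$, so the extra logarithmic term need not shrink.

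What your argument \emph{does} establish is that $\sup_{n\ge j_0}\varphi_n$ is locally bounded above on $E$; what it fails to deliver is the sharp constant $M+\varepsilon$. The paper closes this gap by a different mechanism: after normalising to $M\le 0$ it packages the $P_n$ as Taylor coefficients of a map $f:E\to H(\Delta)$, $f(z)(\lambda)=\sum_n P_n(z)\lambda^n$. The Baire step shows $f$ is locally bounded on an open set, G\^ateaux holomorphy is immediate, and Zorn's theorem then makes $f$ holomorphic on all of $E$. Now for any compact $K$ and any $0<r<1$ one has $C:=\sup_{z\in K,\,|\lambda|\le r}|f(z)(\lambda)|<\infty$, and the Cauchy estimate $|P_n(z)|\le C r^{-n}$ gives $|P_n(z)|^{1/n}\le C^{1/n}/r$, which is $<e^{\varepsilon}$ once $r$ is chosen close to $1$ and $n$ is large. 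The point is that holomorphy is a \emph{qualitative} property that propagates from a Baire-produced open set to all of $E$ via Zorn, after which the sharp quantitative bound on any $K$ follows for free from Cauchy's inequalities; a direct quantitative transfer via subharmonic estimates, as you attempt, leaks a constant that cannot be recovered.
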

\begin{proof}Without  loss  of  generality  we  may  suppose  that $M \le 0.$ Since
$$ \limsup_{n \to \infty}|P_n(z)|^{\frac 1n} \le 1 \quad \forall z \in E $$
the formular
$$ f(z)(\lambda) = \sum_{n \ge 1}P_n(z)\lambda^n $$
defines a function $f: E \to H(\Delta),$ the Fr\'{e}chet space of holomorphic functions on the open unit disc $\Delta \subset \C.$ 

Let us check $f$ is  holomorphic on $E.$ Given $z \in E \setminus \{0\}$ and consider $f(\cdot\ \! z): \C \to H(\Delta)$ with
$$  f(\xi z)(\lambda) = \sum_{n \ge 1}P_n(z)\lambda^n \xi^{k_n} $$
where $k_n = \deg P_n \le n.$ Then $f(\cdot\ \! z)$ is holomorphic because for $0 < r < 1$ we have
$$\aligned
 \lim_{n \to\infty}\sup_{|\lambda| \le r}(|P_n(z)||\lambda|^n)^{\frac 1{k_n}} &= \limsup_{n \to 0}(|P_n(z)|^{\frac 1n}r)^{\frac n{k_n}}\\
&\le \limsup_{n \to 0}|P_n(z)|^{\frac 1n}r \le r < 1.
\endaligned $$
This means $f$ is G\^ateaux holomorphic on $E.$

Now for each $k \ge 1$ we put
$$ A_k := \{z \in E:\ |P_n(z)| \le k^{k_n} \quad \forall n \ge 1\}. $$
By the continuity of $P_n,$ the sets  $A_k$ are closed in $E.$ Moreover, $E =  \bigcup_{k \ge 1}A_k.$ Since $E$ is a Baire space, there exists $k_0 \ge 1$ such that $\text{Int} A_{k_0} \neq \varnothing.$ Then $f$ is holomorphic on $\frac 1{k_0}A_{k_0}$ because
$$ \sum_{n \ge 1}|P_n(z)||\lambda|^n \le \sum_{n \ge 1}\frac{k_0^{k_n}}{k_0^{k_n}}r^n = \sum_{n \ge 1}r^n < \infty \quad \text{for} \ 0 < r < 1. $$
Hence, by Zorn's Theorem \cite[Theorem 1.3.1]{No}, $f$ is holomorphic on $E.$

Now given $K \subset E$ a compact set and $\varepsilon > 0.$ Take  $0 < r < 1$ and denote
$$ C := \sup\{|f(z)(\lambda)|:\ z \in K, |\lambda| \le r\} < \infty. $$
  Then we have 
$$ |P_n(z)| = \left|\frac 1{2\pi i}\int_{|\xi| = r}\frac{f(z)(\xi)d\xi}{\xi^{n + 1}}\right| \le \frac C{r^n} \quad \forall z \in K, $$
i.e,
$$ |P_n(z)|^{\frac 1n} \le \frac {C^{\frac 1n}}{r}.$$
Choose $n_0$ sufficiently we obtain
$$ |P_n(z)|^{\frac 1n} \le \frac {C^{\frac 1n}}{r} < e^\varepsilon \quad \forall n > n_0. $$
The lemma is proved.  
\end{proof}

The Proposition 5.2.1 in \cite{Kl} says that  a non-empty family $(u_\alpha)_{\alpha \in I}$  of  plurisubharmonic  functions from  the Lelong class such that the set $\{z \in \C^n:  \sup_{\alpha \in I}u_\alpha(z) < \infty\}$ is not $\mathcal L$-polar is locally uniformly bounded from above.

The next   is similar to the above result in the infinite dimensional case. 
\begin{thm}\label{thm_2} Let $B$ be a balanced convex compact subset of a Fr\'{e}chet space $E$ and $(P_n)_{n \ge 1}$ be a sequence of continuous homogeneous polynomials on $E$ of degree $\le n.$  Assume that the set
$$ \Big\{z \in E_B: \sup_{n \ge 1}\frac 1n\log|P_n(z)| < \infty\Big\} $$
is not projectively pluripolar in $E_B.$ Then the family $(\frac 1n\log|P_n|)_{n \ge 1}$ is locally uniformly bounded from above on $E_B.$
\end{thm}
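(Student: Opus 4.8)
The plan is to reduce the infinite-dimensional statement to a one-variable, and then a finite-dimensional, pluripotential-theoretic fact by slicing with complex lines, exactly as in the finite-dimensional prototype (Proposition 5.2.1 of \cite{Kl}), but carrying everything on the normed space $E_B$. First I would fix the set
$$ S := \Big\{z \in E_B: \sup_{n \ge 1}\tfrac 1n\log|P_n(z)| < \infty\Big\} $$
and observe that $S = \bigcup_{m\ge 1} S_m$ where $S_m := \{z\in E_B : |P_n(z)| \le m^{n}\ \forall n\ge 1\}$, each $S_m$ being closed in $E_B$ by continuity of the $P_n$ (here one uses $\deg P_n \le n$, so $|P_n(z)|\le m^n$ is the natural bound). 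Since $S$ is not projectively pluripolar in $E_B$ and a countable union of projectively pluripolar sets is projectively pluripolar (the standard $\sum 2^{-m}\varphi_m$ argument for $HPSH$, using homogeneity to keep the sum well-behaved under scaling), some $S_{m_0}$ is not projectively pluripolar in $E_B$. On $S_{m_0}$ the family $(\tfrac1n\log|P_n|)$ is uniformly bounded above by $\log m_0$.

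Next I would run the Baire/Zorn machinery of Lemma \ref{lem_1}, but with $E$ replaced by $E_B$: form $f(z)(\lambda) = \sum_{n\ge1} P_n(z)\lambda^n$. Because $\tfrac1n\log|P_n|$ is bounded above by $\log m_0$ on the not-projectively-pluripolar — in particular nonpluripolar, hence nonempty and "fat enough" — set $S_{m_0}$, one gets $\limsup_n |P_n(z)|^{1/n}\le m_0$ there, so $f$ maps $S_{m_0}$ (after rescaling by $1/m_0$) into $H(\Delta)$ and, being Gâteaux holomorphic and locally bounded there, is holomorphic on a zero-neighbourhood of $E_B$ by the Zorn-type extension theorem (\cite[Theorem 1.3.1]{No}), since $S_{m_0}$ closed and nonpluripolar in the Baire space $E_B$ forces $\mathrm{Int}\,S_{m_0}\ne\varnothing$. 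Actually the cleanest route avoids even this: knowing $f$ is Gâteaux holomorphic everywhere on $E_B$ and bounded on (a rescaling of) $S_{m_0}$, whose interior is nonempty, gives holomorphy of $f$ near $0$, and then the Cauchy estimates $|P_n(z)|\le C/r^n$ on any compact set as in Lemma \ref{lem_1} yield a uniform bound for $\tfrac1n\log|P_n|$ on compacta of a zero-neighbourhood. To pass from "bounded on a zero-neighbourhood" to "locally uniformly bounded above on all of $E_B$", I would invoke homogeneity: each $P_n$ is homogeneous of degree $k_n\le n$, so a bound on a fixed ball $\{\|z\|_B\le\rho\}$ transports to a bound $\tfrac1n\log|P_n(z)| \le M + \tfrac{k_n}{n}\log(\|z\|_B/\rho) \le M + \log_+(\|z\|_B/\rho)$ on all of $E_B$, which is locally bounded above.

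The step I expect to be the main obstacle is establishing that $\mathrm{Int}\,S_{m_0}\ne\varnothing$ — equivalently, that "not projectively pluripolar" genuinely upgrades to a Baire-category statement usable here. The subtlety is that $E_B$, the normed space generated by the compact balanced convex set $B$, need \emph{not} be a Baire space (it is typically not complete), so the naive Baire-category argument of Lemma \ref{lem_1} does not transfer verbatim. I would handle this by working back in $E$ itself: the seminorms $\|\cdot\|_k$ of $E$ restrict $S_{m_0}$ to closed sets there, and using that $B$ is compact in $E$ together with the Baire property of the Fréchet space $E$, one shows the relevant boundedness on a neighbourhood of $0$ in $E_B$ directly from a boundedness statement on a neighbourhood of $0$ in $E$. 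Alternatively, and perhaps more robustly, one proves the pluripotential estimate intrinsically: restrict to complex lines $\ell_a\subset E_B$, note $\tfrac1{k_n}\log|P_n|$ restricted to $\ell_a$ lies in the Lelong class of $\C$ with the right growth, apply the one-variable version of Proposition 5.2.1 of \cite{Kl} on each line, and then glue across lines using that $S_{m_0}$ meets "sufficiently many" lines — precisely the non-projective-pluripolarity — to conclude a uniform bound. Reconciling these two viewpoints and making the gluing rigorous in the normed (non-Baire) setting $E_B$ is where the real work lies.
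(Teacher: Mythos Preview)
Your proposal contains a genuine gap that you yourself flag but do not resolve: the step ``closed and not projectively pluripolar in $E_B$ $\Rightarrow$ $\mathrm{Int}\,S_{m_0}\ne\varnothing$'' is simply false. Even in $\C^n$ a closed nonpluripolar set can have empty interior (take any totally real submanifold of sufficiently high real dimension), so no amount of Baire-category reasoning will produce interior points from non-(projective-)pluripolarity alone. Your proposed fix of ``working back in $E$'' does not help either: you do not know that $S$ (or any $S_m$) has nonempty interior in $E$, so there is nothing for the Baire property of $E$ to bite on. The alternative route you sketch---restricting to complex lines and ``gluing across lines using non-projective-pluripolarity''---is pointing in the right direction, but the gluing step is exactly the content of the theorem and you have not said how to carry it out.

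The paper's proof bypasses the interior-point question entirely by working contrapositively and \emph{constructing} a witnessing function in $HPSH(E_B)$. Assume the family is not locally bounded above on $B$; extract a subsequence $u_j=\tfrac1{n_j}\log|P_{n_j}|$ with $M_j:=\sup_B u_j\ge j$. A one-variable removable-singularity argument on each line (extending $u_j(\zeta^{-1}w)-M_j-\log^+(|\zeta|^{-1}\|w\|_B)$ subharmonically across $\zeta=0$) gives the Lelong-type bound $u_j(z)-M_j\le\log^+\|z\|_B$ on all of $E_B$. Lemma \ref{lem_1} (applied on $E$, which \emph{is} Baire) then forces $\limsup_j\exp(u_j(z_0)-M_j)>0$ for some $z_0$, else the Hartogs lemma would contradict the definition of $M_j$. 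Passing to a further subsequence with $M_{j_k}\ge 2^k$, one sets
\[
w(z):=\sum_{k\ge1}2^{-k}\bigl(u_{j_k}(z)-M_{j_k}\bigr),
\]
checks $w\in HPSH(E_B)$ with $w(z_0)>-\infty$, and observes that on the set $S=\{z:\sup_n\tfrac1n\log|P_n(z)|<\infty\}$ the first sum $\sum 2^{-k}u_{j_k}(z)$ is finite while $\sum 2^{-k}M_{j_k}=+\infty$, so $w\equiv-\infty$ on $S$. Hence $S$ is projectively pluripolar, a contradiction. The key idea you are missing is this explicit Siciak-type series: rather than trying to upgrade ``not projectively pluripolar'' to topological fatness, one shows directly that unboundedness of the family manufactures an $HPSH$ function killing $S$.
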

 \begin{proof} Suppose that the family $(\frac 1n\log|P_n|)_{n \ge 1}$ is not locally uniformly bounded from above on $B.$ Then there exists a sequence $(u_j)_{j \ge 1} = (\frac 1{n_j}\log|P_{n_j}|)_{j \ge 1}$ such that
$$ M_j := \sup_{z \in B}u_j(z) \ge j \quad \forall j \ge 1. $$
Take $w \in E_B \setminus B$ and for each $j \ge 1$ consider the function
$$ v_j(\zeta) := u_j (\zeta^{-1}w) - M_j - \log^+(|\zeta|^{-1}\|w\|_B), \quad \text{for}\  \zeta \in \Delta(\|w\|_B) \setminus \{0\}. $$
Obviously,  $v_j$  is subharmonic and, it is easy to see that $v_j(\zeta) \le O(1)$ as $\zeta \to 0.$  Hence, in view of Theorem 2.7.1 in \cite{Kl},  $v_j$ extends  to  a subharmonic function, say $\widetilde v_j,$ on $\Delta(\|w\|_B).$ Now, by  the  maximum principle,  $\widetilde v_j  \le 0$ on $\Delta(\|w\|_B).$  In  particular,
$$  v_j(1) =  \widetilde v_j(1) = u_j (w) - M_j - \log^+\|w\|_B \le 0. $$
Hence
\begin{equation}\label{eq_1} 
u_j(z) - M_j \le \log^+\|z\|_B \quad \text{for} \ z \in E_B, \forall j \ge 1. 
\end{equation}
Then there exists $z_0 \in E_B$ such that
\begin{equation}\label{eq_2}
\limsup_{j \to \infty}\exp{(u_j(z_0) - M_j)} =: \delta > 0. 
\end{equation}
For otherwise  we  would have $\limsup_{j \to \infty}\exp(u_j(z) - M_j) \le 0$   at  each point $z \in E_B.$ Note that, by Lemma \ref{eq_1}, the sequence $(u_j(z) - M_j)_{j \ge 1},$ hence, $(\exp(u_j(z) - M_j))_{j \ge 1}$ is bounded from above on any compact set in $E_B.$ This would imply from \cite[Lemma 1.1.12]{No}  that $\exp{(u_j(z) - M_j)} < \frac 12$ for all $z \in B$  and  all sufficiently large $j.$ But  then  the  last estimate  would contradict  the  definition  of  the  constants $M_j.$

Now we choose a subsequence $(u_{j_k})_{k \ge 1} \subset (u_j)_{j \ge 1}$ such that
$$ \lim_{k \to \infty}\exp(u_{j_k}(z_0) - M_{j_k}) = \delta \quad \text{and}\quad M_{j_k} \ge 2^{k} $$
for all $k \ge 1.$ Consider the function
$$ w(z) := \sum_{k \ge 1}2^{-k}(u_{j_k} - M_{j_k}), \quad z \in E_B. $$
In view of (\ref{eq_1}) we have the extimate
$$ w_k(z) := 2^{-k}(u_{j_k} - M_{j_k}) - 2^{-k}\log R \le 0 $$
for $z \in E_B,$ $\|z\|_B \le R$ and $R \ge 1.$ Thus $w_k$ is plurisubharmonic on $\{z \in E_B: \ \|z\|_B < R\}$ and $w_k \le 0.$ Hence, the function $\sum_{k \ge 1}w_k = w - \log R,$ $R > 1,$ is either plurisubharmonic on  $\{z \in E_B: \ \|z\|_B < R\}$  or identically $-\infty.$ Consequently, as  $R$  can  be chosen  arbitrarily  large,  $w$  is  either plurisubharmonic or identically $-\infty.$ Therefore, since  $w(z_0) > -\infty,$ $w \in PSH(E_B).$ It is easy to see that $w \in HPSH(E_B).$ 

If $z \in E_B,$ $\sup_{n \ge 1}\frac 1n\log|P_n(z)| < \infty$ then $\sum_{k \ge 1}2^{-k}u_{j_k}(z) < \infty$ and, hence
$$ w(z) \le \sum_{k \ge 1}2^{-k}u_{j_k}(z) - \sum_{k \ge 1}1 = -\infty $$
which proves  that  the  set   
$$ \Big\{z \in E_B: \sup_{n \ge 1}\frac 1n\log|P_n(z)| < \infty\Big\} $$
is projectively pluripolar in $E_B.$ This contradics the hypothesis.
\end{proof}

\begin{cor}\label{cor_3} Let $B, E$ and $(P_n)_{n \ge 1}$ be as in   Theorem \ref{thm_2}; in addition assume that $B$ contains a non-projectively-pluripolar subset. Then the family $(\frac 1n\log|P_n|)_{n \ge 1}$ is locally uniformly bounded from above on $E.$
\end{cor}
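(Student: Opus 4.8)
The plan is to bootstrap the conclusion from Theorem~\ref{thm_2}. Since $B$, $E$ and $(P_n)_{n\ge1}$ satisfy the hypotheses of that theorem, the family $(\frac{1}{n}\log|P_n|)_{n\ge1}$ is locally uniformly bounded from above on $E_B$. As $B$ is the closed unit ball of the normed space $E_B$, local uniform boundedness at $0\in E_B$ gives constants $M_0$ and $r\in(0,1]$ with $\frac{1}{n}\log|P_n(z)|\le M_0$ for all $z\in rB$ and all $n$, and the identity $\frac{1}{n}\log|P_n(\lambda z)|=\frac{k_n}{n}\log|\lambda|+\frac{1}{n}\log|P_n(z)|$ with $k_n=\deg P_n\le n$ upgrades this (take $|\lambda|=1/r\ge1$) to $\frac{1}{n}\log|P_n(z)|\le M_0+\log(1/r)$ for all $z\in B$ and all $n$. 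Hence the non-projectively-pluripolar subset $A\subset B$ lies in $G:=\{z\in E:\ \sup_{n\ge1}\frac{1}{n}\log|P_n(z)|<\infty\}$, and therefore $G$ is not projectively pluripolar in $E$. It thus suffices to prove the global counterpart of Theorem~\ref{thm_2}: \emph{if $G$ is not projectively pluripolar in $E$, then $(\frac{1}{n}\log|P_n|)_{n\ge1}$ is locally uniformly bounded from above on $E$}.

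For this I argue by contradiction, imitating the proof of Theorem~\ref{thm_2}. Suppose the family is not locally uniformly bounded from above on $E$; since $E$ is metrizable there are $z_j\to z^{*}$ in $E$ and indices $n_j$ with $\frac{1}{n_j}\log|P_{n_j}(z_j)|\to\infty$, so the compact set $K_0:=\{z_j:\ j\ge1\}\cup\{z^{*}\}$ satisfies $\sup_{n\ge1}\sup_{z\in K_0}\frac{1}{n}\log|P_n(z)|=+\infty$; replacing $K_0$ by the closed absolutely convex hull of $K_0\cup B$ (still compact because $E$ is complete) I may assume $K_0$ is balanced, convex, compact and contains $B$, hence $A$. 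Now run the argument of Theorem~\ref{thm_2} verbatim with $K_0$ and $E_{K_0}$ in place of $B$ and $E_B$: extract $u_j=\frac{1}{n_j}\log|P_{n_j}|$ with $M_j:=\sup_{z\in K_0}u_j(z)\ge j$; for $w\in E_{K_0}\setminus K_0$ restrict to the complex line through $w$, remove the logarithmic singularity at the origin, and apply the maximum principle to get $u_j-M_j\le\log^{+}\|\cdot\|_{K_0}$ on $E_{K_0}$; choose $z_0\in E_{K_0}$ with $\limsup_j\exp(u_j(z_0)-M_j)>0$ and a subsequence with $M_{j_k}\ge2^{k}$; and set $w:=\sum_{k\ge1}2^{-k}(u_{j_k}-M_{j_k})$. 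Then $w$ is a homogeneous plurisubharmonic function on $E_{K_0}$ with $w(z_0)>-\infty$ and $w\equiv-\infty$ on $G\cap E_{K_0}\supset A$, so $A$ is projectively pluripolar in $E_{K_0}$.

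The hard part will be passing from ``$A$ is projectively pluripolar in $E_{K_0}$'' to ``$A$ is projectively pluripolar in $E$'', which contradicts the hypothesis; equivalently, one must globalize $w$ to a nontrivial element of $HPSH(E)$ that is still $-\infty$ on $A$. This is delicate because $E_{K_0}$ need not be closed in $E$ and the gauge $\|\cdot\|_{K_0}$ is not a continuous seminorm on $E$, so neither $w$ nor the estimate $u_j-M_j\le\log^{+}\|\cdot\|_{K_0}$ survives off $E_{K_0}$ on its own. The route I would pursue exploits that each $u_{j_k}=\frac{1}{n_{j_k}}\log|P_{n_{j_k}}|$ is already plurisubharmonic on all of $E$ (being $\log|\cdot|$ of a continuous homogeneous polynomial on $E$), so that the series defining $w$ makes sense on $E$; choosing for each $n$ a defining seminorm $\|\cdot\|_{m_n}$ of $E$ and a constant $C_n>0$ with $|P_n(z)|\le C_n\|z\|_{m_n}^{k_n}$, one obtains the global bound $u_n(z)-\frac{1}{n}\log C_n\le\log^{+}\|z\|_{m_n}$, and one attempts to pick the subsequence $(n_{j_k})$ and the indices $m_{j_k}$ — using the freedom left by $\sup_{z\in K_0}\frac{1}{n}\log|P_n(z)|\to\infty$ — so that the constants $M_{j_k}-\frac{1}{n_{j_k}}\log C_{n_{j_k}}$ and the tails of $\sum_k2^{-k}\log^{+}\|z\|_{m_{j_k}}$ are adequately controlled; then the ``plurisubharmonic-or-identically-$(-\infty)$'' dichotomy used in the proof of Theorem~\ref{thm_2} yields the wanted function in $HPSH(E)$. (Alternatively, one could aim to establish the pointwise bound $\limsup_n\frac{1}{n}\log|P_n(z)|\le M$ for every $z\in E$ and conclude via Lemma~\ref{lem_1}, but this hinges on the same control, so the essential difficulty is unchanged.) Making this control work is where the substance of the corollary lies.
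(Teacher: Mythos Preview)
Your route is very different from, and much heavier than, the paper's. The paper's entire argument is a two–line density observation: it asserts that it suffices to show $E_B$ is dense in $E$, and proves density by Hahn--Banach: if $\overline{E_B}\ne E$ there is a nonzero $\varphi\in E'$ with $\varphi|_{E_B}=0$, whence $v:=\log|\varphi|\in HPSH(E)$, $v\not\equiv-\infty$, and $v\equiv-\infty$ on $B$, contradicting that $B$ contains a non-projectively-pluripolar subset of $E$. No enlarged compacts, no rerunning of the Theorem~\ref{thm_2} machinery, no globalization of a homogeneous plurisubharmonic witness.

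Your proposal, by contrast, tries to push the contradiction argument of Theorem~\ref{thm_2} through on an auxiliary balanced convex compact $K_0\supset B$ and then to pass from ``$A$ is projectively pluripolar in $E_{K_0}$'' to ``$A$ is projectively pluripolar in $E$''. You correctly identify this last passage as the crux, and you do not actually carry it out: the sketch (choose seminorms $\|\cdot\|_{m_n}$ and constants $C_n$ with $|P_n(z)|\le C_n\|z\|_{m_n}^{k_n}$, then select a subsequence so that $\sum_k 2^{-k}(u_{j_k}-M_{j_k})$ defines an element of $HPSH(E)$) is not executed, and there is no reason offered that the freedom in the subsequence simultaneously controls $M_{j_k}-\tfrac{1}{n_{j_k}}\log C_{n_{j_k}}$ and the growth of the indices $m_{j_k}$. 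So as written your argument has a genuine gap exactly where you say it does.

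In short, the paper bypasses this whole difficulty with the Hahn--Banach density step; you attack it head-on and get stuck. (It is fair to note, as you implicitly do, that the paper does not explain \emph{why} density of $E_B$ in $E$ suffices to upgrade local uniform boundedness from $E_B$ to $E$; that step is asserted but not argued.)
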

\begin{proof} It suffices to prove that $E_B$ is dense in $E.$ Indeed, if the closure of the   subspace $E_B$  is not equal to $E$ then, by the Hahn-Banach theorem, there exists $\varphi \in E',$ $\varphi \neq 0,$ such that $\varphi(E_B) = 0.$ Then it is easy to see that $v := \log|\varphi| \in HPSH(E),$  $v \not\equiv 0,$ $B \subset E_B \subset \{z: \ v(z) = -\infty\}.$ This contradicts the fact that $B$ contains a non-projectively-pluripolar subset.
\end{proof}
It is known that a subset with non-empty interior in a Fr\'{e}chet space is not pluripolar, hence it is not projectively pluripolar. Then by Corollary \ref{cor_3} we have the following.
\begin{cor}\label{cor_4} Let $B$ be a balanced convex compact subset of a Fr\'{e}chet space $E$ which contains a non-projectively-pluripolar subset  and $(P_n)_{n \ge 1}$ be a sequence of continuous homogeneous polynomials on $E$ of degree $\le n.$  If the set
$$ \Big\{z \in E_B: \sup_{n \ge 1}\frac 1n\log|P_n(z)| < \infty\Big\} $$
has the non-empty interior in $E$ then the family $(\frac 1n\log|P_n|)_{n \ge 1}$ is locally uniformly bounded from above on $E.$
\end{cor}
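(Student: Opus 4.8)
The plan is to read this off Corollary~\ref{cor_3}. Of its hypotheses — that $B,E,(P_n)_{n\ge1}$ be as in Theorem~\ref{thm_2}, and that $B$ contain a non-projectively-pluripolar subset — everything is assumed here except one: the requirement from Theorem~\ref{thm_2} that the set
$$ S:=\Big\{z\in E_B:\ \sup_{n\ge1}\frac1n\log|P_n(z)|<\infty\Big\} $$
be not projectively pluripolar in $E_B$. So the entire task is to verify this single point from the assumption that $S$ has non-empty interior in $E$.

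First I would transfer the interior hypothesis from $E$ to $E_B$. Choose a non-empty open set $U\subseteq E$ with $U\subseteq S$; since $S\subseteq E_B$ we have $U\subseteq E_B$, and because $B$ is bounded in $E$ the canonical injection $(E_B,\|\cdot\|_B)\hookrightarrow E$ is continuous, so $U$ is open in the normed space $E_B$ as well. Hence $S$ has non-empty interior in $E_B$. (Incidentally $U$ already forces $E_B=E$ as sets, a linear subspace containing a non-empty open set being the whole space, but this is not needed.) Moreover, $B$ being compact, convex and balanced, $E_B$ is a Banach space, in particular a Fr\'echet space.

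Then I would invoke the fact recalled just before the statement: a subset with non-empty interior in a Fr\'echet space is not pluripolar, hence not projectively pluripolar. Applied to $E_B$, this yields that $S$ is not projectively pluripolar in $E_B$, so all hypotheses of Corollary~\ref{cor_3} are met, and the desired conclusion — local uniform boundedness from above of $(\frac1n\log|P_n|)_{n\ge1}$ on $E$ — follows at once.

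The argument is short, and the only step meriting attention is the passage from ``interior in $E$'' to ``interior in $E_B$'': the norm topology of $E_B$ is in general strictly finer than the subspace topology inherited from $E$, but that is precisely why an $E$-open subset of $E_B$ remains $E_B$-open, the inclusion $E_B\hookrightarrow E$ being continuous by boundedness of $B$. Beyond this bookkeeping there is no real obstacle.
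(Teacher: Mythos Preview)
Your proposal is correct and follows exactly the route the paper indicates: the paper's entire argument is the one sentence preceding the statement, ``a subset with non-empty interior in a Fr\'{e}chet space is not pluripolar, hence it is not projectively pluripolar,'' followed by an appeal to Corollary~\ref{cor_3}. You spell out more carefully than the paper does the passage from ``interior in $E$'' to ``interior in $E_B$'' via continuity of the inclusion $(E_B,\|\cdot\|_B)\hookrightarrow E$, which is the right way to make the argument precise since the non-projective-pluripolarity required by Theorem~\ref{thm_2} is in $E_B$, not in $E$; but this is bookkeeping, not a departure in method.
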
  
\begin{lem}\label{lem_5} A regular inductive limit $E = \displaystyle\varinjlim_{n \to \infty}E_n$ of 
countable family of locally convex spaces satisfies the countable boundedness condition (c.b.c), i.e. if given a sequence $\{B_n\}$ of bounded subsets of $E,$ there are $\lambda_n > 0,$ $n \ge 1,$ such that $\bigcup_{n \ge 1}\lambda_nB_n$ is bounded.
\end{lem}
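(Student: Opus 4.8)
The plan is to combine the defining property of a regular inductive limit — that every bounded subset of $E$ is contained in, and bounded in, one of the steps $E_n$ — with a diagonal choice of scalars, which is available because the family $(E_n)_{n\ge 1}$ is countable; this is essentially the same mechanism that makes a metrizable locally convex space satisfy the countable boundedness condition.

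First I would normalise the data. Write the given bounded sets as $(B_m)_{m\ge 1}$, reserving the subscript $n$ for the steps. Regularity furnishes, for each $m$, an index $k(m)$ and a set $\widehat B_m$ bounded in $E_{k(m)}$ whose canonical image in $E$ is $B_m$; replacing $k(m)$ by $\max\{k(1),\dots,k(m)\}$ I may assume $k(1)\le k(2)\le\cdots$. In each $E_n$ fix an increasing fundamental system $(U_{n,j})_{j\ge 1}$ of absolutely convex zero-neighbourhoods, and for every $m$ record positive numbers $t_{m,j}$ with $\widehat B_m\subseteq t_{m,j}\,U_{k(m),j}$ for all $j$. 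The scalars are then chosen diagonally, e.g.\ $\lambda_m:=\bigl(m(1+\max_{1\le j\le m}t_{m,j})\bigr)^{-1}$, so that for every fixed pair $(n,j)$ one has $\lambda_m t_{m,j}\le 1$ for all but finitely many $m$ with $k(m)\le n$, the remaining finitely many products being absorbed into a larger constant.

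It then remains to show that $\bigcup_{m\ge 1}\lambda_m B_m$ is bounded in $E$ — equivalently, by regularity again, that it lies in a single step and is bounded there. This is the technical heart of the argument: a zero-neighbourhood of $E$ restricts to a zero-neighbourhood of each step, but the member of the base $(U_{n,j})_j$ needed may vary with $n$, so to reduce the boundedness test to the countably many constants $t_{m,j}$ already controlled one must exploit the fundamental sequence of bounded sets carried by a regular countable inductive limit together with the way the linking maps $E_n\to E_{n+1}$ transport the neighbourhood bases from step to step. The main obstacle I anticipate is precisely this reduction; countability of the index set is used decisively, since it is what makes the diagonalisation legitimate.
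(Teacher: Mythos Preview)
Your outline never becomes a proof. First, by fixing ``an increasing fundamental system $(U_{n,j})_{j\ge 1}$'' of zero-neighbourhoods in each step you tacitly assume that every $E_n$ is metrizable; the hypothesis grants no countable base, so the diagonal choice of $\lambda_m$ you describe has nothing countable to diagonalise against. Second, you yourself flag the decisive step---proving that $\bigcup_m \lambda_m B_m$ is bounded in $E$---as ``the main obstacle'' and leave it open; the fundamental sequence of bounded sets you want to exploit is not supplied by regularity alone. So the proposal stops exactly where the difficulty begins.

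For comparison, the paper's argument is far shorter and runs along a different line: after using regularity to arrange that $B_n$ is bounded in $E_n$, it picks a zero-neighbourhood $U_n\subset E_n$, scales so that $\lambda_n B_n\subset U_n$, observes that $U:=\bigcup_n U_n$ is a zero-neighbourhood of $E$ containing $\bigcup_n\lambda_n B_n$, and ends there. But containment in a single zero-neighbourhood does not give boundedness, so the paper's proof has the very gap you anticipated. In fact the lemma is false as stated: $\C^{(\N)}=\varinjlim\C^n$ is a regular (even strict) countable inductive limit, yet for $B_m=\{e_m\}$ (the standard unit vectors) no choice of $\lambda_m>0$ makes $\{\lambda_m e_m:m\ge1\}$ bounded---test against the continuous seminorm $p(x)=\sum_m m\lambda_m^{-1}|x_m|$. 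Some further hypothesis on the limit (for instance, that it carries a fundamental sequence of bounded sets) is needed to rescue the statement.
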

 \begin{proof} 
Given a sequence $(B_n)_{n \ge 1}$ of bounded subsets of $E.$ By the regularity of $E,$ for each $n \ge 1,$ there exists $k_n \ge 1$ such that $B_n$ is bounded in $E_{k_n}.$ Without loss of generality we may assume that $k_n = n.$ Hence we can find a sequence of positive numbers $(\lambda_n)_{n \ge 1}$ such that $\lambda_n B_n \subset U_n,$ a zero-neighbourhood in $E_n,$ for all $n \ge 1.$ Obviously, $\bigcup_{n \ge 1}\lambda_nB_n \subset U := \bigcup_{n \ge 1}U_n,$ a zero-neighbourhood in $E.$ Lemma is proved.
\end{proof}
\begin{thm}\label{thm_6}  Let $A$ be a non-projectively-pluripolar set which is contained in a   balanced convex compact subset of a Fr\'{e}chet space $E$   and $f = \sum_{n \ge 1}P_n$ be a formal power series where $P_n$ are continuous homogeneous polynomials of degree $n$ on $E$ with values in a Fr\'{e}chet space $F.$  If for each $a \in A,$ the restriction of $f$ on the complex line $\ell_a$ is convergent then $f$ is convergent in a neighbourhood of $0 \in E.$
\end{thm}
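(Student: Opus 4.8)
The plan is to reduce the Fréchet-valued problem to the scalar statement already proven in Theorem~\ref{thm_2} via the Hahn--Banach theorem, and then to convert the uniform boundedness of $\frac1n\log|P_n|$ into genuine local convergence of the series. First I would fix a continuous seminorm $q$ on $F$ with associated local Banach space $F_q$ and quotient map $\pi_q$, and consider the scalar-valued continuous homogeneous polynomials $\langle u, P_n\rangle$ for $u$ in the unit ball of $F_q'$. Let $B$ be the balanced convex compact set containing $A$. For each $a \in A$, convergence of $f|_{\ell_a}(\zeta) = \sum_n P_n(\zeta a)\zeta^{?}$ in some disc $\Delta(r_a)$ forces $\limsup_n \frac1n\log\|P_n(a)\|_q^{F_q} < \infty$ (indeed $\le \log(1/r_a) < \infty$), hence $\sup_n \frac1n \log|\langle u, P_n(a)\rangle| < \infty$ for every $u$ in the dual ball. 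Thus for each such $u$ the set $\{z \in E_B : \sup_n \frac1n\log|\langle u,P_n(z)\rangle| < \infty\}$ contains $A$, which is non-projectively-pluripolar in $E_B$ (note $A \subset B \subset E_B$, and a non-projectively-pluripolar subset of $E$ lying in $E_B$ remains non-projectively-pluripolar there, since an $HPSH(E_B)$ function witnessing the contrary could be combined with the gauge of $B$ to produce one on a neighbourhood — or more simply, project via any Hahn--Banach extension as in Corollary~\ref{cor_3}).

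Next, applying Theorem~\ref{thm_2} to the scalar sequence $(\langle u, P_n\rangle)_{n\ge1}$, I obtain that $(\frac1n\log|\langle u,P_n|)_{n\ge1}$ is locally uniformly bounded above on $E_B$, in particular on $B$: there is a constant $M_u$ with $|\langle u,P_n(z)\rangle| \le e^{nM_u}$ for all $z\in B$ and all $n$. The key step is now to make this bound uniform in $u$. For that I would invoke a Baire-category argument on the dual ball $U^\circ = \{u \in F_q' : \|u\| \le 1\}$, or better, work directly on $E$: define, for $k\ge1$,
\[
A_k := \Big\{ z \in B : \sup_{u \in U^\circ}\ \sup_{n\ge1}\ \tfrac1n\log|\langle u, P_n(z)\rangle| \le k \Big\}
= \Big\{ z \in B : \sup_{n\ge1}\tfrac1n\log\|P_n(z)\|_q \le k \Big\},
\]
where $\|\cdot\|_q$ denotes the (semi)norm on $F_q$. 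Each $A_k$ is closed in $B$ by continuity of the $P_n$ and lower semicontinuity of a supremum, and by the pointwise bound just obtained $B = \bigcup_k A_k$. Since $B$ is a compact metrizable space, it is Baire, so some $A_{k_0}$ has nonempty interior in $B$; shrinking, I get a nonempty relatively open $V \subset A_{k_0}$ on which $\|P_n(z)\|_q \le e^{nk_0}$ for all $n$. Then, since $B$ is balanced convex, a standard homogeneous-polynomial argument (polarization plus the Cartan-type trick, or passing through $0$: if $V$ contains $z_0 + W$ then $P_n$ is bounded on $W$ too using $P_n(z_0 \pm w)$ expansions) upgrades this to a bound $\|P_n(z)\|_q \le C^n$ for $z$ in a zero-neighbourhood $W_0 \subset B$.

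Finally, from $\|P_n(z)\|_q \le C^n$ on $W_0$ and homogeneity, I get $\|P_n(z)\|_q \le C^n \big(\|z\|_{W_0}\big)^n$ where $\|\cdot\|_{W_0}$ is the gauge of $W_0$; hence $\sum_n \|P_n(z)\|_q < \infty$ whenever $\|z\|_{W_0} < 1/(2C)$, and this holds for every continuous seminorm $q$ on $F$ (with $C = C_q$ depending on $q$). Since $F$ is Fréchet, its topology is generated by a countable increasing family $(q_j)$, and the resulting zero-neighbourhoods $\{z : \|z\|_{W_0^{(j)}} < 1/(2C_{q_j})\}$ intersect to a zero-neighbourhood of $E$ on which $\sum_n P_n(z)$ converges absolutely in $F$; this is the asserted convergence. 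The main obstacle is the uniformity-in-$u$ step: Theorem~\ref{thm_2} only gives a bound depending on each fixed functional, so the Baire argument on the compact set $B$ (rather than on the dual) is essential, and one must be careful that passing from a bound on a relatively open piece of $B$ to a bound on a full zero-neighbourhood genuinely uses that $B$ is balanced and convex. I expect the polarization/Cartan step to be routine once the closed sets $A_k$ are in place.
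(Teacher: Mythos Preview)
Your approach has two gaps, one repairable and one not.

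The first: the claim ``$B=\bigcup_k A_k$'' does not follow from the $u$-wise bound $|\langle u,P_n(z)\rangle|\le e^{nM_u}$ you obtained from Theorem~\ref{thm_2}. That bound depends on $u$, and $\|P_n(z)\|_q=\sup_{u\in U^\circ}|\langle u,P_n(z)\rangle|$ can still grow superexponentially in $n$ even if every individual $\langle u,P_n(z)\rangle$ does not. (This particular step can be salvaged: the functions $\tfrac1n\log\|P_n(\cdot)\|_q$ are themselves plurisubharmonic and satisfy the homogeneity relation used in the proof of Theorem~\ref{thm_2}, so that proof applies to them directly and yields $\sup_{z\in B}\sup_n\|P_n(z)\|_q^{1/n}<\infty$ without any Baire argument on $B$.)

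The second gap is the decisive one. Even granting, for each seminorm $q_j$, a zero-neighbourhood $W_0^{(j)}$ and a constant $C_{q_j}$, the sets $\{z:\|z\|_{W_0^{(j)}}<1/(2C_{q_j})\}$ form a countable family of neighbourhoods of $0$ whose intersection is \emph{not} in general a neighbourhood: nothing prevents $C_{q_j}\to\infty$, and then the intersection collapses to $\{0\}$. This is exactly the obstruction the paper's proof is designed to defeat. After handling the scalar case $F=\C$ via Corollary~\ref{cor_3}, the paper packages all the scalar information into a single linear map $T:F'_{\rm bor}\to H(0_E)$, $T(u)=\sum_{n\ge1}u\circ P_n$, shows that $T$ has closed graph, and applies Grothendieck's closed graph theorem (using that $F'_{\rm bor}$ is ultrabornological and $H(0_E)$ is a countable inductive limit of Banach spaces). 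It then invokes the countable boundedness condition for $H(0_E)$ (Lemma~\ref{lem_5}) together with Proposition~1.8 of Bonet--Galbis to force $T$ to land in a \emph{single} step $H^\infty(V)$; this uniform $V$ is precisely what your seminorm-by-seminorm approach cannot manufacture. The remaining work---defining $\widehat f:V\to F''$ and showing it is holomorphic with values actually in $F$---is then routine.
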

\begin{proof} We divide the proof into two steps: 

(i) \textit{Step 1: We consider the case where $F = \C.$ } It follows from the hypothesis that
$$ \limsup_{n \to \infty}|P_n(z)|^{\frac 1n} < \infty \quad \forall z \in A. $$
Then, by Corollary \ref{cor_3} there exists a zero-neighbourhood $U$ in $E$ such that
$$ \sup\{|P_n(z)|^{\frac 1n}:\ z \in U, n \ge 1\} =: M < \infty. $$
This implies that $f$ is uniformly convergent on $(2M)^{-1}U.$

\vskip0.2cm
(ii) \textit{Step 1: We consider the case where $F$ is Fr\'{e}chet.} By the step 1 we can define the   linear map 
$$T: F'_{\rm bor} \to H(0_E)$$ 
by letting
$$ T(u) = \sum_{n \ge 1}u(P_n) $$
where  $H(0_E)$ denotes the space of germs of scalar holomorphic functions at $0 \in E.$
Suppose that $u_\alpha \to u$ in $F'_{\rm bor}$ and $T(u_\alpha) \to v$ in $H(0_E) $ as $\alpha \to \infty.$  This implies, in particular, that $[T(u_\alpha)](z) \to v(z)$ for all $z$ in some zero-neighbourhood $U$ in $E.$  However, for $z \in U$ we have
$$\aligned
 \left[T(u_\alpha - u)\right](z) &= \sum_{n \ge 1}(u_\alpha - u)(P_n(z)) = \lim_{n \to \infty}\sum_{k = 1}^n(u_\alpha - u)(P_n(z))  \\
&= (u_\alpha - u)\Big(\lim_{n \to \infty}\sum_{k =1}^nP_n(z)\Big) = (u_\alpha - u)\Big(\sum_{n \ge 1}P_n(z)\Big).
\endaligned$$
Then $[T(u_\alpha)](z) \to [T(u)](z)$ for all $z \in U.$ This implies that $v = T(u).$ Hence $T$ has a closed graph. 
 
 Meanwhile, since $F$ is Fr\'{e}chet, by \cite[Theorem 13.4.2]{Ja} we have $\beta(F', F)_{\rm bor} = \eta(F', F)$ on $F'.$ This implies that   $F'_{\rm bor}$ is ultrabornological.
  On the other hand, because $E$ is metrizable,  we have
 $$ H(0_E) =  \varinjlim_{n \to \infty}(H^\infty(V_n), \|\cdot\|_n) $$
where $(V_n)_{n \ge }$ is a  countable fundamental  neighbourhood system at $0 \in E,$ and $\|\cdot\|_n$ is the norm on the Banach space $H^\infty(V_n)$ given by $\|f\|_n = \sup_{z \in  V_n}|f(z)|.$ 
Hence, by the closed graph theorem of Grothendieck \cite[Introduction, Theorem B]{Gr}, $T$ is continuous. 

Now, by \cite[Lemma 4.33]{Di} and Lemma \ref{lem_5}, $H(0_E)$   satisfies (c.b.c). Using Proposition 1.8 in \cite{BG} we deduce that there exists a neighbourhood $V$ of $0 \in E$ such that $T: F'_{\rm bor} \to H^\infty(V)$ is continuous linear. 

 Now we  define the map $\widehat f: V \to F''_{\rm bor}$ by the formula
$$[\widehat{f}(z)](u) = [T(u)](z), \quad z \in  V, \; u \in F'_{\rm bor}.$$
Since $T$ is continuous and point evaluations on $H(V)_{\rm bor}$ (see \cite[Proposition 3.19]{Di}) are continuous it follows that $\widehat f(z) \in F''_{\rm bor}$ for all $z \in V.$  Moreover, for each fixed $u \in F'_{\rm bor}$ the mapping
$$ z \in V \mapsto  [T(u)](z)$$
is holomorphic, that is
$$ \widehat f: V \to (F''_{\rm bor}, \sigma(F''_{\rm bor}, F'_{\rm bor})) $$
is a continuous mapping. For all $a \in V, b \in E$ and all $u \in F'_{\rm bor}$ the mapping
$$ \{t \in \C:\ a + t b \in V\} \ni \lambda  \mapsto u \circ \widehat f(a + \lambda b)$$ 
is a G\^ateaux holomorphic mapping and hence
$$ \widehat f: V \to (F''_{\rm bor}, \sigma(F''_{\rm bor}, F'_{\rm bor})) $$
is holomorphic.

By \cite[8.13.2 and 8.13.3]{Ja}, $F'_{\rm bor}$ is a complete locally convex space. Hence
by \cite[Theorem 4, p.210]{Ho}  applied to the complete space $F'_{\rm bor}$ we see that
$(F''_{\rm bor}, \sigma(F''_{\rm bor}, F'_{\rm bor}))$ and $(F'_{\rm bor})'_\beta$ have the same bounded sets. An application of \cite[Proposition 13]{Na}  shows that
$$ \widehat f: V \to (F'_{\rm bor})'_\beta $$
is holomorphic.

Let $j$ denote the canonical injection from $F$ into $F''.$ If $z \in B := V \cap \{ta:\ t \in \C, a \in A\}$ and $\widehat f(z) \neq j(f(z))$ then there exists $u \in F'$ such that  
$$ \widehat f(z)(u) \neq j(f(z))(u) = u(f(z)).  $$
This, however, contradicts the fact that for all $z \in B$ we have
$$ \widehat{f}(z)(u) = [T(u)](z) = \sum_{n \ge 1}u(P_n)(z) = u(f(z)). $$
We now fix a non-zero  $z \in B.$  Then there exists a unique sequence in $F'',$ $(a_{n, z})_{n=1}^\infty,$ such that for all $\lambda \in \C$
$$ \widehat f (\lambda z) = \sum_{n =0}^\infty a_{n, z}\lambda^n. $$
Since $\widehat f(0) = f(0) = a_{0, z}$ it follows that $a_{0, z} \in F.$  Now suppose that $(a_{j, z})_{j = 0}^n \subset F.$ When $|\lambda| \le 1,$ $\widehat f(\lambda z) = f(\lambda z) \in F.$ Hence, if $\lambda \in \C,$ $0 < |\lambda| < 1,$ then
$$ \frac{\widehat f(\lambda z) - \sum_{j=0}^na_{j, z}\lambda^j}{\lambda^{n+1}} = \sum_{j=n+1}^\infty a_{j, z}\lambda^{j - n -1} \in F. $$
Since $F$ is complete we see, on letting $\lambda$ tend to $0,$ that $a_{n+1, z} \in F.$ By induction $a_{n, z} \in F $ for all $n$ and hence $\widehat f(\lambda z) \in F$ for all $\lambda \in \C$ and all $z \in B.$ Since $\widehat f$ is continuous and $F$ is a closed subspace of  $(F'_{\rm bor})'_\beta$ (see \cite[Lemma 2.1]{QVHB}) we have shown that 
$\widehat f : V \to F$ is holomorphic.
 
Hence, the series $\sum_{n \ge 1}P_n$ is convergent on $V$ to $f.$
\end{proof}
\begin{thm}\label{thm_7} Let $F$ be a Fr\'{e}chet space, $f: \Delta_n \to F$ be a function which belongs to $C^k$-class at $0 \in \C^n$ for $k \ge 0$ and $A \subset \C^n$ be a non-projectively-pluripolar set. If the restriction of $f$ on each complex line $\ell_a,$ $a \in  A,$ is holomorphic then   there exists an entire function $\widehat f$ on $\C^n$ such that $\widehat f = f$ on $\ell_a$ for all $a \in A.$
\end{thm}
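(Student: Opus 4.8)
The plan is to reduce the statement to a situation where Theorem \ref{thm_6} (or its scalar core, which is Corollary \ref{cor_3}) applies, by replacing $f$ with its formal Taylor series at the origin. Since $f$ belongs to the $C^k$-class at $0 \in \C^n$, write the formal Taylor expansion $f \sim \sum_{n \ge 1} P_n$, where each $P_n$ is a continuous ($F$-valued) homogeneous polynomial of degree $n$; this is a purely formal object when $k$ is finite, but that is harmless. The first step is to observe that for each $a \in A$ the hypothesis that $f|_{\ell_a}$ is holomorphic forces the one-variable power series $\sum_{n \ge 1} P_n(a)\lambda^n$ not merely to converge near $0$ but to converge \emph{on all of $\C$}: indeed $f(\lambda a)$, being holomorphic on $\Delta$ as a function of $\lambda$ with $a$ fixed (the complex line $\ell_a$ meets $\Delta_n$ in a disk), agrees with its Taylor series, and by the usual Forelli-type argument the radius of convergence must be infinite since $\lambda \mapsto f(\lambda a)$ is actually defined and holomorphic for $|\lambda|a \in \Delta_n$ only — so here the key point is that the $C^k$-smoothness at $0$ plus holomorphicity along $\ell_a$ pins down the germ, and one then reruns the argument on dilated lines to get entirety. (This is exactly the one-dimensional heart of Forelli's theorem, which I would quote or reprove in half a line.)

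Once we know that $\limsup_{n\to\infty}|u(P_n(a))|^{1/n} = 0$ for every $a \in A$ and every $u \in F'$ — equivalently $\frac1n\log|u(P_n(a))| \to -\infty$ — the second step is to feed this into Corollary \ref{cor_3}. Fix $u \in F'$ and set $Q_n := u \circ P_n$, a continuous scalar homogeneous polynomial of degree $n$. The set $\{z : \sup_n \frac1n\log|Q_n(z)| < \infty\}$ contains $A$, hence is not projectively pluripolar; since $A$ lies in a balanced convex compact $B$... wait — here $E = \C^n$ is finite-dimensional, so $B = \overline{\Delta_n}$ works and $E_B = E$. Corollary \ref{cor_3} then gives: $(\frac1n\log|Q_n|)_n$ is locally uniformly bounded above on $\C^n$, and in fact, because $A$ forces the stronger pointwise bound $\limsup = -\infty$ along $A$, the Hartogs-type Theorem \ref{thm_2}/Lemma \ref{lem_1} machinery upgrades this to: for every $\varepsilon > 0$ and every compact $K \subset \C^n$, $\frac1n\log|Q_n(z)| < \varepsilon$ on $K$ for large $n$. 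Applied with $K$ any ball, this says $\sum_n Q_n$ converges uniformly on compacta of $\C^n$, i.e. $\sum_n u(P_n)$ is an entire scalar function.

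The third step is to pass from the scalar statements (one for each $u \in F'$) to an $F$-valued entire function $\widehat f$. This is precisely the duality/closed-graph argument in Step 2 of the proof of Theorem \ref{thm_6}: define $T: F'_{\mathrm{bor}} \to H(\C^n)$ (now the global Fréchet space of entire functions, since we have entirety) by $T(u) = \sum_n u(P_n)$; show $T$ has closed graph exactly as there; invoke ultrabornologicity of $F'_{\mathrm{bor}}$ and Grothendieck's closed graph theorem to get $T$ continuous; then define $\widehat f(z)$ by $[\widehat f(z)](u) = [T(u)](z)$, check $\widehat f(z) \in F''_{\mathrm{bor}}$, that $\widehat f$ is holomorphic into $(F'_{\mathrm{bor}})'_\beta$, and finally that $\widehat f$ takes values in $F$ by the induction-on-Taylor-coefficients argument using completeness of $F$ and the fact that $\widehat f(\lambda a) = f(\lambda a) \in F$ for $a \in A$ and $|\lambda|$ small. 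Since $\widehat f$ and $f$ have the same germ along each $\ell_a$ and $\widehat f|_{\ell_a}$ is entire, $\widehat f = f$ on $\ell_a$ for all $a \in A$.

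The main obstacle is the first step — establishing that smoothness at $0$ together with holomorphicity along the lines $\ell_a$ actually yields \emph{global} convergence of $\sum_n P_n(a)\lambda^n$ rather than just convergence near $0$; this is where the Forelli-type phenomenon enters and where the $C^k$ hypothesis is genuinely used (to know the one-variable restrictions are determined by finitely many derivatives and then bootstrap). The duality arguments in step three are essentially a replay of Theorem \ref{thm_6}'s proof with $H(0_E)$ replaced by $H(\C^n)$, so I expect them to go through with only cosmetic changes.
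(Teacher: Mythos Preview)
Your proposal has a genuine gap at the very first step, and it is precisely the place you flag as the ``main obstacle'' --- but you have misdiagnosed what the obstacle actually is.

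You write the formal Taylor expansion $f \sim \sum P_n$ with each $P_n$ a ``continuous $F$-valued homogeneous polynomial of degree $n$.'' But $f$ is only assumed to be $C^\infty$ at $0$ in the \emph{real} sense (the hypothesis ``$C^k$ for $k\ge 0$'' means for every $k$); its Taylor expansion at $0$ is a series in $z$ \emph{and} $\bar z$, and there is no reason a priori for the degree-$n$ part to be holomorphic. Equivalently, you are tacitly assuming $P_n(\lambda z) = \lambda^n P_n(z)$ for all $z$, which is false for a general smooth $f$. Along the special lines $\ell_a$, $a \in A$, the restriction is holomorphic, so there the Cauchy coefficients $P_k(a) = \frac{1}{2\pi i}\int_{|\lambda|=1} f(\lambda a)\lambda^{-k-1}\,d\lambda$ are the power-series coefficients; but off $A$ this integral defines a function that is not, on the face of it, a polynomial at all.

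This is exactly where the paper spends most of its effort. It defines $P_k(z)$ by the Cauchy integral for all small $z$, uses the $C^{k+1}$-smoothness to write the real Taylor expansion $P_k(z) = \sum_{\alpha+\beta=k} P_{k,\alpha,\beta}(z) + o(|z|^k)$ with $P_{k,\alpha,\beta}$ of bidegree $(\alpha,\beta)$ in $(z,\bar z)$, then compares $P_k(\lambda z)$ with $\lambda^k P_k(z)$ for $z \in r_{k+1}A$ to kill all terms with $\beta > 0$. The killing step uses that, for each $\varphi \in F'$, the function $(\deg P_{k,\alpha,\beta})^{-1}\log|\varphi \circ P_{k,\alpha,\beta}|$ is homogeneous plurisubharmonic and equals $-\infty$ on the non-projectively-pluripolar set $r_{k+1}A$, hence identically $-\infty$. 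Only after this does one know that $P_k$ is a genuine holomorphic homogeneous polynomial on all of $\C^n$. You cannot bypass this by ``quoting Forelli in half a line''; this \emph{is} the Forelli argument, and the non-projective-pluripolarity of $A$ is used here in an essential way, not just later for convergence.

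There is a second, smaller gap in your Step 2. From $\limsup_n \frac{1}{n}\log|Q_n(a)| = -\infty$ on $A$, Corollary~\ref{cor_3} gives only local uniform boundedness above, and Lemma~\ref{lem_1} requires the pointwise hypothesis on \emph{all} of $E$, not merely on $A$. To pass from $-\infty$ on $A$ to $-\infty$ everywhere the paper invokes Siciak's result (the upper regularization $u_m^*$ of $u_m(z)=\limsup_k \frac{1}{k}\log\|P_k(z)\|_m$ lies in the Lelong class $\mathcal L(\C^n)$) together with Bedford--Taylor's theorem that $\{u_m^* \ne u_m\}$ is pluripolar; since the cone $A^*$ over $A$ is non-pluripolar, $u_m^* = -\infty$ on a non-pluripolar set and hence $u_m^*\equiv -\infty$. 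Your ``Hartogs-type machinery'' remark does not cover this step.

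Finally, your Step 3 is correct in spirit but heavier than needed. The paper works directly with the seminorms $\|\cdot\|_m$ of $F$: once $u_m \equiv -\infty$ for every $m$, the series $\sum_k P_k(z)$ converges in $F$ for every $z \in \C^n$, and that is the end. No $F'_{\rm bor}$, no closed-graph theorem, no bidual argument is required here; that machinery was essential in Theorem~\ref{thm_6} because the target was the germ space $H(0_E)$, but here the target is already the Fr\'echet space $F$ and one can argue seminorm by seminorm.
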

\begin{proof} By the hypothesis, for each $k \ge 0$ there exists $r_k \in (0, 1)$ such that $f$ is a $C^k$-function on $\Delta_n(r_k).$ We may assume that $r_k \searrow 0.$ Put
$$ P_k(z) = \frac 1{2\pi i}\int_{|\lambda| = 1}\frac{f(\lambda z)d\lambda}{\lambda^{k + 1}}, \quad z \in \Delta_n(r_k). $$
Then, for each $k \ge 0$ and $p \ge k,$ $P_m$ is a bounded $C^p$-function on $\Delta_n(r_p).$ Since $\lambda \mapsto f(\lambda a)$ is holomorphic for all $a \in A$ we deduce that
\begin{equation}\label{eq_3} 
P_k(\lambda a) = \lambda^kP_k(a) \quad \text{for}\ a \in  A, \lambda \in \C.
\end{equation}
 By the boundedness of $P_k$ on $\Delta_n(r_k)$ we have
$$ P_k(w) = O(|w|^k) \quad \text{as}\ w \to 0. $$
On the other hand, since $P_k \in C^{k + 1}(\Delta_n(r_{k + 1})),$ the Taylor expansion of $P_k$  at $0 \in \Delta_n(r_{k + 1})$ has the form
\begin{equation}\label{eq_5} P_k(z) = \sum_{\alpha + \beta = k}P_{k, \alpha, \beta}(z) + |z|^k\varrho(z) 
\end{equation}
where $P_{k, \alpha, \beta}$  is a polynomial of degree $\alpha$ in $z$ and degree $\beta$ in $\overline z$ and $\varrho(z) \to 0$ as $z \to 0.$
  
In (\ref{eq_5}), replacing $z$ by $\lambda z,$ $|\lambda| < 1,$  from (\ref{eq_3}) we obtain
\begin{equation}\label{eq_6} 
\sum_{\alpha + \beta = k}P_{k, \alpha, \beta}(z)\lambda^\alpha\overline{\lambda}^\beta  + |\lambda|^k|z|^k\varrho(\lambda z) =   \sum_{\alpha + \beta = k}P_{k, \alpha, \beta}(z)\lambda^k + \lambda^k|z|^k\varrho(z) 
\end{equation}
for $z \in  r_{k+1} A.$
 
 This yields that $\varrho(\lambda  z) = \varrho(z)$ for $\lambda \in [0, 1),$ and hence, $\varrho(z) = \varrho(0) = 0$ for $z \in  r_{k+1} A.$ Thus
$$ P_{k, \alpha, \beta}(z) = 0 \quad\text{for}\ \beta > 0 \ \text{and}\ z \in  r_{k+1}A. $$
Note that $r_{k+1}A$  is also not projectively pluripolar. It is easy to check that 
$$ P_{k, \alpha, \beta} = 0 \quad\text{for}\ \beta > 0. $$
Indeed, for every $\varphi \in F',$   the function
$$ u(w) = \frac 1{\deg P_{k, \alpha, \beta}}\log|(\varphi \circ P_{k, \alpha, \beta})(w)| $$
is homogeneous plurisubharmonic on $\C^n,$ $u \equiv -\infty$ on $r_{k+1}A.$ Since $r_{k+1}A$ is not projectively pluripolar, it implies
that $u \equiv -\infty$ and hence $\varphi \circ P_{k, \alpha, \beta} \equiv 0$ on $\C^n$ for every $\varphi \in F'.$ It implies that  $ P_{k, \alpha, \beta} \equiv 0$ on $\C^n$ for $\beta > 0.$

Thus, from (\ref{eq_5}) we have
$$ P_k(z) = P_{k, k, 0}(z) = \sum_{|\alpha| = k}c_\alpha z^\alpha $$
for $z \in \Delta_n(r_{k+1})$ and $P_k$ is a homogeneous holomorphic polynomial of degree $k.$

Now, let $(\|\cdot\|_m)_{m \ge 1}$  be an increasing fundamental system of continuous semi-norms defining the topology of $F.$  By the hypothesis, for every $m \ge 1$ 
$$ \limsup_{k \to \infty}\frac 1k\log\|P_k(z)\|_{m} = -\infty \quad\text{for} \ z \in A. $$
Then, by Corollary \ref{cor_3}, the sequence $(\frac 1k\log\|P_k(z)\|_{m})_{k \ge 1}$ is locally uniformly bounded from above on $\C^n$ for all $m \ge 1.$ Thus we can define
$$ u_{m}(z) = \limsup_{k \to \infty}\frac 1k\log\|P_k(z)\|_{m}, \quad z \in \C^n. $$
By \cite{Si} the upper semicontinuous regularization $u_m^*$ of $u_m$ belongs to the Lelong class $\mathcal L(\C^n)$ of  plurisubharmonic functions with logarithmic growth on $\C^n.$ Moreover, by  Bedford-Taylor's theorem \cite{BT} 
$$ S_m := \{z \in \C^n:\ u_m^*(z) \neq u_m(z)\} $$
is pluripolar for all $m \ge 1.$

On the other hand, by \cite{LH}, $A^* := \{ta:\ t \in \C, a \in A\}$ is not pluripolar. This yields that $u_m^* \equiv -\infty$ for all $m \ge 1$ because $u^*_m = u_m = -\infty$ on $A^* \setminus S_m$ and $A^* \setminus S_m$ is non-pluripolar.  Since $u_m^* \ge u_m$ we have $u_m \equiv -\infty$ for $m \ge 1.$ Hence the series $\sum_{k \ge 0}P_k(z)$ is convergent for $z \in \C^n$ and it defines a holomorphic extension $\widehat f$ of $f\big|_{\ell_a}$ for every $a \in A.$
\end{proof}
\begin{thm}\label{thm_8} Let $A \subset \C^n$ be a non-projectively-pluripolar set  and $(f_\alpha)_{\alpha \ge 1}$   be a sequence of formal power series of continuous homogeneous polynomials on $\C^n$ with values in a Fr\'{e}chet space $F.$ Assume that there exists $r_0 \in (0, 1)$ such that, for each $a \in A,$  the restriction of  $(f_\alpha)_{\alpha \ge 1}$ on $\ell_a$ is a sequence of holomorphic functions which is convergent on the disk $\Delta(r_0).$   Then there exists $r > 0$ such that $(f_\alpha)_{\alpha \ge 1}$  is a sequence of holomorphic functions that converges on $\Delta_n(r).$
\end{thm}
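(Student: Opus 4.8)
The plan is to reduce the statement about a sequence of formal power series to the Hartogs-type estimate of Theorem \ref{thm_2} and its Corollary \ref{cor_3}, in the spirit of the proof of Theorem \ref{thm_6}. Write each $f_\alpha = \sum_{k \ge 1} P_{\alpha, k}$ with $P_{\alpha, k}$ a continuous homogeneous polynomial of degree $k$ on $\C^n$ with values in $F$. For $a \in A$ the hypothesis says $f_\alpha\big|_{\ell_a}$ is holomorphic on $\Delta(r_0)$ for every $\alpha$, and the sequence converges there; in particular, for each $a\in A$ the double-indexed family $\{P_{\alpha,k}(a)\}$ is controlled, so that $\limsup_{k\to\infty}\frac1k\log\|P_{\alpha,k}(a)\|_m$ is bounded above (uniformly in $\alpha$, after a rescaling by $r_0$) for every continuous seminorm $\|\cdot\|_m$ on $F$. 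The first step is therefore to organize the two indices into a single sequence of homogeneous polynomials of controlled degree: e.g. enumerate the pairs $(\alpha,k)$ and pass to scalar-valued polynomials $\varphi\circ P_{\alpha,k}$ for $\varphi\in F'$, exactly as in Theorem \ref{thm_7}.

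The second step is to apply Corollary \ref{cor_3} (with $B$ a balanced convex compact set containing $A$ — here one may take the closed convex balanced hull of $A$ in $\C^n$, which is compact, and $A$ is non-projectively-pluripolar by hypothesis) to conclude that the relevant family $\bigl(\frac1k\log\|P_{\alpha,k}\|_m\bigr)$ is locally uniformly bounded from above on $\C^n$, uniformly in $\alpha$. Concretely this gives a radius $r>0$ and, for each seminorm $\|\cdot\|_m$, a constant $M_m$ with $\|P_{\alpha,k}(z)\|_m \le M_m^{k}$ for $\|z\| \le r$, all $k$, all $\alpha$ (after relabeling $r$ to absorb $M_m$, shrinking if necessary). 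Since the modulus of continuity is uniform in $\alpha$, this immediately yields that each $f_\alpha$ is holomorphic on $\Delta_n(r)$ with a uniform bound, and the convergence along the lines $\ell_a$ for $a\in A$ propagates to a neighbourhood of $0$ by the vector-valued argument already used in the second step of Theorem \ref{thm_6}: one builds the transpose map $T_\alpha$ into $H^\infty(V)$, uses the closed graph theorem and the (c.b.c.) property to get a common $V$, and then shows $\widehat f_\alpha$ takes values in $F$ and equals the series. Passing through the $\sigma(F''_{\rm bor},F'_{\rm bor})$-topology and using that $F$ is a closed subspace of $(F'_{\rm bor})'_\beta$, one recovers genuine $F$-valued holomorphy on $\Delta_n(r)$.

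The third step is the convergence of the \emph{sequence} $(f_\alpha)$ itself on $\Delta_n(r)$. For this I would run the Bedford–Taylor / Siciak argument of Theorem \ref{thm_7}: for each $m$ define $u_m(z) = \limsup_{\alpha\to\infty}$ (suitable tail norm of $f_\alpha - f_\beta$, or of $f_\alpha$) and show its upper semicontinuous regularization $u_m^*$ lies in the Lelong class, while $S_m = \{u_m^* \ne u_m\}$ is pluripolar; since $A^* = \{ta : t\in\C, a\in A\}$ is non-pluripolar by \cite{LH} and $u_m = -\infty$ (or $u_m$ records the Cauchy property) on $A^*$, conclude $u_m \equiv$ the required value on all of $\C^n$, hence on $\Delta_n(r)$. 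Then the Cauchy estimates on $\Delta_n(r)$ turn this into uniform Cauchy-ness of the coefficients $P_{\alpha,k}$ as $\alpha\to\infty$, giving a limit holomorphic function to which $(f_\alpha)$ converges uniformly on compact subsets of $\Delta_n(r)$.

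The main obstacle I expect is the \emph{uniformity in $\alpha$}: Corollary \ref{cor_3} as stated applies to a single sequence $(P_n)_{n\ge1}$, so one must genuinely fold the double index into one sequence in a way that the degree-$\le n$ growth condition is respected and the resulting bound is insensitive to which $f_\alpha$ a given polynomial came from. A clean way is to treat, for fixed $\varphi\in F'$ and fixed $m$, the single function-of-one-complex-variable $z\mapsto \bigl(\varphi\circ f_\alpha\bigr)(\lambda z)$ and apply Theorem \ref{thm_2} to the full collection simultaneously — the projective-pluripolarity dichotomy in Theorem \ref{thm_2} does not care about cardinality of the index set, only about the exceptional $-\infty$ set, so the argument goes through verbatim with the enlarged family. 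The remaining care is bookkeeping: ensuring the same $V$ (equivalently the same $r$) works for all $\alpha$ through the closed-graph/(c.b.c.) machinery, which is automatic because the bound $M_m$ produced by the corollary is already $\alpha$-independent.
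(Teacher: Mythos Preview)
Your first two steps (reducing to Corollary \ref{cor_3} via the doubly-indexed family $\{P_{\alpha,k}\}$) do yield, for each seminorm $\|\cdot\|_m$, a local uniform bound $\sup_{\alpha,k}\frac1k\log\|P_{\alpha,k}(z)\|_m\le M_m$ on a fixed ball; this is a legitimate alternative to the paper's route, which instead quotes the scalar case (Theorem~3.5 of \cite{LH}) as a black box, uses barrelledness of $F'_{\rm bor}$ to get equicontinuity of the operators $T_\alpha:u\mapsto u\circ f_\alpha$ in $L(F'_{\rm bor},H(0_{\C^n}))$, and then the regularity of the germ space $H(0_{\C^n})$ to extract a single radius $r$ valid simultaneously for all $\alpha$ and all $u$. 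Note, however, that your direct bound gives a convergence radius $e^{-M_m}$ that a priori depends on the seminorm index $m$; you still need an extra step (the (c.b.c.)/closed-graph argument you allude to, or the paper's equicontinuity argument) to obtain one $r>0$ independent of $m$. This is fixable but should be made explicit rather than folded into ``relabeling $r$ to absorb $M_m$''.

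The genuine gap is your third step. The Bedford--Taylor/Siciak argument in Theorem \ref{thm_7} shows that $\limsup_k\frac1k\log\|P_k(z)\|_m\equiv-\infty$, which is a statement about convergence of a \emph{series} $\sum_k P_k$ with homogeneous terms of degree $k$; it does not transfer to convergence of a \emph{sequence} $(f_\alpha)_\alpha$ of already-holomorphic functions. Your proposed $u_m(z)=\limsup_\alpha(\text{``tail norm of }f_\alpha-f_\beta\text{''})$ is not in the Lelong class (the $f_\alpha$ live only on $\Delta_n(r)$, not on all of $\C^n$), and there is no degree or homogeneity structure in the index $\alpha$ to exploit. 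The paper handles this by proving and invoking a vector-valued Vitali theorem (Proposition \ref{prop_ext1}): once $(f_\alpha)$ is bounded in $H^\infty(\Delta_n(r),F)$, pointwise convergence on a set that is not rare forces locally uniform convergence. To feed Proposition \ref{prop_ext1}, the paper first upgrades the convergence set from the cone over $A$ to all of $\Delta_n(r)$, using that $(u\circ f_\alpha)$ already converges in $H(0_{\C^n})$ for every $u\in F'$ (by \cite{LH}) together with the Fr\'echet-valued version of \cite[Theorem 2.1]{AN} recorded in Remark \ref{rmk1}. Your sketch has no substitute for this normal-family/Vitali step, and the Lelong-class machinery cannot supply one.
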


The proof of this theorem requires some extra   results concerning to  Vitali's theorem for a sequence of Fr\'{e}chet-valued holomorphic functions.
\begin{rmk}\label{rmk1} In exactly the same way,   Theorem 2.1 in \cite{AN} is true for the Fr\'{e}chet-valued case.
\end{rmk}

\begin{lem}\label{lem_ext1} Let $E, F$ be  Fr\'echet spaces,  $D \subset E$ be an open set.  Let $f: D \to F$ be a locally bounded function such that $\varphi \circ f$ is holomorphic for all $\varphi \in W \subset F',$ where $W$ is separating. Then $f$ is holomorphic.
\end{lem}
The proof of Lemma runs as in the proof of Theorem 3.1 in \cite{AN}, but here we use Vitali's theorem in (\cite{BS}, Prop. 6.2) which states for a sequence of holomorphic functions on an open connected subset of a locally convex space.
\begin{lem} \label{lem2}  Let $D$ be a domain in a Fr\'echet  space $E$  and  $f: D \to F$ be holomorphic, where  $F$ is a barrelled locally convex space. Assume that  $D_0 = \{z \in D:\; f(z) \in G\}$ is not rare in $D,$  where $G$ is a closed subspace of $F.$ Then $f(z) \in G$ for all $z \in D.$
\end{lem}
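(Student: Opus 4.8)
The plan is to reduce the $F$-valued assertion to a scalar identity theorem by testing $f$ against the annihilator
$$G^{\perp} := \{\varphi \in F' : \varphi|_G = 0\}.$$
Since $G$ is a \emph{closed} subspace of the Hausdorff locally convex space $F$, the Hahn--Banach theorem yields the representation $G = \bigcap_{\varphi \in G^{\perp}} \ker \varphi$. Consequently, the desired conclusion ``$f(z) \in G$ for all $z \in D$'' is equivalent to ``$\varphi \circ f \equiv 0$ on $D$ for every $\varphi \in G^{\perp}$.'' So I would fix an arbitrary $\varphi \in G^{\perp}$ and aim to show that the scalar function $g := \varphi \circ f : D \to \C$ vanishes identically. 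Here $g$ is holomorphic because $f$ is holomorphic and $\varphi$ is continuous and linear.

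Next I would extract an open set of zeros of $g$ from the hypothesis that $D_0$ is not rare. By definition this means $U := \operatorname{int}\overline{D_0} \neq \varnothing$. Every $z \in U$ lies in $\overline{D_0}$, hence (as $E$ is metrizable) is the limit of a sequence $z_k \in D_0$; since $f(z_k) \in G$ we get $g(z_k) = \varphi(f(z_k)) = 0$, and continuity of $g$ forces $g(z) = 0$. Thus $g$ vanishes on the nonempty open set $U \subset D$.

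I would then invoke the identity theorem for scalar holomorphic functions on a connected open subset of a locally convex space (see \cite{Di}): a holomorphic function on the domain $D$ that vanishes on a nonempty open subset vanishes on all of $D$. Applied to $g$, which vanishes on $U$, this gives $g = \varphi \circ f \equiv 0$ on $D$. Since $\varphi \in G^{\perp}$ was arbitrary, the Hahn--Banach representation of $G$ yields $f(z) \in G$ for every $z \in D$, which is the claim.

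The step I expect to be the main obstacle is the infinite-dimensional identity theorem: one must be sure the analytic-continuation argument genuinely propagates the vanishing of $g$ from the open set $U$ across the connected domain $D$ in the Fréchet space $E$, rather than only along individual complex lines $\lambda \mapsto g(a + \lambda b)$. This is exactly the place where the holomorphy of $f$ is used in full strength, and where the barrelledness of $F$ is relevant: it guarantees that $f$ is holomorphic in the strong (locally bounded, locally power-series-representable) sense—so that each $\varphi \circ f$ is holomorphic in the sense required to run the identity theorem—consistently with Lemma \ref{lem_ext1}. The only other point needing care, the passage from ``$D_0$ not rare'' to ``$g$ vanishes on an open set,'' is handled by the elementary continuity argument above.
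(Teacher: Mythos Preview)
Your proof is correct and follows essentially the same strategy as the paper: reduce to a scalar statement via Hahn--Banach, use continuity of $f$ to pass from $D_0$ to a nonempty open subset of $D$ on which the scalar function vanishes, and then invoke the infinite-dimensional identity theorem (the paper cites \cite{BS}, Prop.~6.6, rather than \cite{Di}). The only cosmetic difference is packaging---the paper first treats the case $G=\{0\}$ by contradiction and then reduces the general case to it via the quotient map $\omega: F \to F/G$, whereas you test directly against all $\varphi \in G^{\perp}$; your closing remark about barrelledness is not quite on target (neither argument actually uses that hypothesis), but this does not affect the validity of your proof.
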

\begin{proof} (i) We first consider the case $G = \{0\}.$ On the contrary, suppose that $f(z^*) \not= 0$ for some $z^* \in D \setminus D_0.$ By the Hahn-Banach theorem, we can find $\varphi \in F'$ such that  $(\varphi \circ f)(z^*) \not= 0.$ Let $z_0 \in (\text{int}\overline{D_0}) \cap D$ and let $W$ be a balanced convex neighbourhood of $0 \in E$ such that $z_0 + W \subset \overline{D_0}.$ Then by the continuity of $f$ we deduce that $f = 0$ on $z_0 + W.$ Hence, it follows from the identity theorem (see \cite{BS}, Prop. 6.6) that $f = 0$ on $D.$ 
This contradicts above our claim $(\varphi \circ f)(z^*) \not= 0.$

(ii) For the general case, consider the quotient space $F/G$ and the holomorphic function  $\omega \circ f: D \to F/G$ where $\omega: F \to F/G$ is the canonical map. Then $\omega \circ f \equiv  0$ on $D_0.$ By the case (i), $\omega \circ f \equiv 0$ on $D.$ This means that $f(z) \in G$ for all $z \in D.$ \end{proof}

\begin{prop}\label{prop_ext1} Let $E, F$ be Fr\'echet spaces and $D \subset E$ a domain. Assume that $(f_i)_{i \in \N}$ is a  locally bounded sequence of holomorphic functions on $D$ with values in $F.$ 
Then the following assertions are equivalent:
\begin{enumerate}
\item[\rm (i)] The sequence $(f_i)_{i \in \N}$ converges uniformly on all compact subsets of  $D$ to a holomorphic function $f: D \to F;$ 
\item[\rm (ii)]  The set $D_0 = \{z \in D: \dlim_if_i(z)\; \text{exists}\}$ is not rare in $D.$  
\end{enumerate}
\end{prop}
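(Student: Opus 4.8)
The implication $\mathrm{(i)}\Rightarrow\mathrm{(ii)}$ is immediate: if $(f_i)$ converges uniformly on the compact subsets of $D$ to a holomorphic $f$, then $D_0=D$, which is not rare in $D$. So the content lies in $\mathrm{(ii)}\Rightarrow\mathrm{(i)}$, and the plan is to bootstrap the scalar‑valued Vitali theorem — available on a Fr\'echet domain by Remark~\ref{rmk1}, i.e. \cite[Theorem~2.1]{AN} — up to the Fr\'echet‑valued statement, using the rigidity results Lemma~\ref{lem_ext1} and Lemma~\ref{lem2} to climb back from the weak topology of $F$ to its own topology.

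\emph{Step 1 (slicing by continuous functionals).} Fix $\varphi\in F'$. Since $(f_i)$ is locally bounded and $\varphi$ is continuous, $(\varphi\circ f_i)$ is a locally bounded sequence of scalar holomorphic functions on $D$, and the set $\{z\in D:\dlim_i(\varphi\circ f_i)(z)\ \text{exists}\}$ contains $D_0$, hence is not rare in $D$. By Remark~\ref{rmk1} the sequence $\varphi\circ f_i$ converges uniformly on every compact subset of $D$ to a holomorphic $g_\varphi\colon D\to\C$. In particular $\varphi(f_i-f_j)\to 0$ uniformly on compacta as $i,j\to\infty$, so $(f_i)$ is uniformly Cauchy on compacta for the weak topology $\sigma(F,F')$. \emph{Step 2 (identifying the limit).} I would then set $\widehat f(z)(\varphi):=g_\varphi(z)=\dlim_i\varphi(f_i(z))$. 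If $U$ is a translate of a zero‑neighbourhood on which $(f_i)$ takes values in a bounded set $B\subset F$, then $|\widehat f(z)(\varphi)|\le\sup_{b\in B}|\varphi(b)|$ for $z\in U$, so $\widehat f(U)$ lies in the bipolar of $B$, an equicontinuous and hence bounded subset of $(F'_{\rm bor})'_\beta$; thus $\widehat f$ is a locally bounded map into $(F'_{\rm bor})'_\beta$ which is separately holomorphic against the separating family $F'$. Arguing exactly as in the proof of Theorem~\ref{thm_6} (weak holomorphy into the complete space $(F'_{\rm bor})'_\beta$ plus local boundedness) shows $\widehat f$ is holomorphic there; and since $\widehat f(z)=\dlim_i f_i(z)\in F$ for every $z\in D_0$, a non‑rare set, Lemma~\ref{lem2} applied to the closed subspace $G=F$ of $(F'_{\rm bor})'_\beta$ (closedness by \cite[Lemma~2.1]{QVHB}) forces $\widehat f(z)\in F$ for all $z\in D$. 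Writing $f:=\widehat f\colon D\to F$, which is holomorphic, the bipolar theorem inside $F$ shows $f$ inherits the local bounds of $(f_i)$.

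\emph{Step 3 (upgrading to uniform convergence).} Set $h_i:=f_i-f\colon D\to F$; these are holomorphic, locally bounded, satisfy $\varphi\circ h_i\to 0$ uniformly on compacta for all $\varphi\in F'$, and $h_i(z)\to 0$ in $F$ for $z\in D_0$. To finish I would fix a compact $K$, and at each point of $K$ use the Cauchy integral formula along complex lines to recover the Taylor coefficients of $h_i$ at a nearby base point from finitely many point values on lines meeting $D_0$; the uniform scalar estimates of Step~1 together with the local bound then force these coefficients to $0$, giving uniform convergence $h_i\to 0$ on a neighbourhood of each point of $K$, and a covering argument yields uniform convergence on $K$. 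Hence $f_i\to f$ uniformly on every compact subset of $D$.

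\emph{Main obstacle.} The hard part is Step~3: composition with functionals only controls $\sigma(F,F')$, and a locally bounded, weakly‑uniformly‑Cauchy sequence of holomorphic maps need not converge in the topology of $F$ without extra input. The needed input is the genuine (not merely weak) pointwise convergence on the non‑rare set $D_0$, which has to be propagated to all of $D$ by the rigidity of holomorphic maps — Lemma~\ref{lem2} for the limit itself in Step~2, and the Cauchy‑estimate/identity‑theorem argument in Step~3 for the convergence. Concretely, the two technical points to be checked carefully are: that $(F'_{\rm bor})'_\beta$ is a complete, barrelled ambient space in which $F$ sits as a closed subspace (so Lemma~\ref{lem2} applies), and that $\widehat f$ is locally bounded there (so the Theorem~\ref{thm_6}‑style holomorphy argument, resp. Lemma~\ref{lem_ext1}, applies). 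Everything else reduces to the standard vector‑valued Cauchy‑estimate machinery.
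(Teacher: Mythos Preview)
Your outline takes a different route from the paper, and the crucial Step~3 is not a proof as written. The paper avoids the whole weak-to-strong bootstrap by packaging the sequence into a single holomorphic map $\widetilde f\colon D\to\ell^\infty(\N,F)$, $\widetilde f(z)=(f_i(z))_i$: this map is locally bounded because $(f_i)$ is, and it is weakly holomorphic against the separating family $\{\varphi\circ pr_k:\varphi\in F',\ k\in\N\}$, hence holomorphic by Lemma~\ref{lem_ext1}. One then applies Lemma~\ref{lem2} \emph{once}, with the closed subspace $G=\{(w_i)\in\ell^\infty(\N,F):\lim_i w_i\ \text{exists in }F\}$; since $\widetilde f(D_0)\subset G$ and $D_0$ is not rare, $\widetilde f(D)\subset G$. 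This already gives \emph{strong} pointwise convergence $f_i(z)\to f(z)$ in $F$ for every $z\in D$, and $f=\Phi\circ\widetilde f$ (with $\Phi((y_i))=\lim_i y_i$ a bounded operator $G\to F$) is automatically holomorphic. Locally uniform convergence is then a direct $\varepsilon/3$ argument from equicontinuity plus pointwise convergence on all of $D$.

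Your Steps~1--2 only yield weak pointwise convergence $f_i(z)\to f(z)$ in $\sigma(F,F')$ on $D$, with strong convergence still confined to $D_0$; the equicontinuity argument cannot start from that. Your Step~3 is supposed to close this gap, but ``recover the Taylor coefficients of $h_i$ at a nearby base point from finitely many point values on lines meeting $D_0$'' does not do it: Taylor coefficients are derivatives at one point, not determined by finitely many point values, and you give no mechanism forcing them to $0$ in the topology of $F$ rather than just weakly. There is also an unchecked hypothesis in Step~2: Lemma~\ref{lem2} requires the target to be barrelled, and you have not verified this for $(F'_{\rm bor})'_\beta$ when $F$ is an arbitrary Fr\'echet space. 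The $\ell^\infty(\N,F)$ device sidesteps both issues at once: $\ell^\infty(\N,F)$ is Fr\'echet, hence barrelled, and ``the sequence converges in $F$'' becomes membership in a closed subspace, so Lemma~\ref{lem2} settles Step~3 for free.
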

\begin{proof} It suffices to prove the implication (ii) $\Rightarrow$ (i) because the case (i) $\Rightarrow$ (ii) is trivial. 
%
%
Define $\widetilde{f}: D \to \ell^\infty(\N, F)$ by $\widetilde{f}(z) = (f_i(z))_{i \in \N},$ where $\ell^\infty(\N, F)$ is the Fr\'echet space with the topology induced by the system of semi-norms
$$|\!|\!|x|\!|\!|_k = |\!|\!|(x_\alpha)_{\alpha \in \N}|\!|\!|_k = \sup_{\alpha}\|x_\alpha\|_k,\; \forall k, \; \forall x = (x_\alpha) \in \ell^\infty(\N, F).$$

For each $k \in \N$ we denote
$pr_k: \ell^\infty(\N, F) \to F$ is the $k$-th projection with $pr_k((w_i)_{i \in \N}) = w_k.$ Obviously 
$$W = \{\varphi \circ pr_k; \;   \varphi \in F', \;   k \in \N\} \subset \ell^\infty(\N, F)'$$
is separating and 
$$\varphi \circ pr_k \circ \widetilde{f} = \varphi \circ pr_k \circ (f_i)_{i \in \N} = \varphi \circ f_k$$
is holomorphic for every $k \in \N.$ Then by Lemma \ref{lem_ext1}, $\widetilde{f}$ is holomorphic.

Since the space 
$$G = \{(w_i)_{i \in \N} \in \ell^\infty(\N, F): \; \displaystyle\lim_{i \to \infty}w_i \;\text{exists}\}$$
is closed, by the hypothesis, $\widetilde{f}(z) \in G$ for all $z \in D_0.$ It follows from Lemma \ref{lem2} that $\widetilde{f}(z) \in G$ for all $z \in D.$ Thus $f(z) = \lim_{i \to \infty}f_i(z)$ exists for all $z \in D.$ Note that $\Phi: G \to F$ given by $\Phi((y_i)_{i \in \N}) = \lim_{i \to \infty}y_i$ defines a bounded operator. Therefore $f = \Phi \circ \widetilde{f}$ is holomorphic.

Finally, in order to prove that $(f_i)_{i \in \N}$ converges uniformly on  compact sets in $D$ to $f,$ it suffices to show that $(f_i)_{i \in \N}$ is locally uniformly convergent in $D$ to $f.$  Since $(f_i)_{i \in \N}$ is locally bounded, by (\cite{BS}, Prop. 6.1) $(f_i)_{i \in \N}$ is equicontinuous at every $a \in D.$  Let $a$ be fixed point of $D.$ Then for every balanced convex neighbourhood $V$ of $0$ in $F$  there exists a neighbourhood $U_a^1$ of $a$ in $D$ such that
\begin{equation}\label{tag1}
f_i(z) - f_i(a) \in 3^{-1}V,\quad \forall z \in U_a^1, \; \forall i \in \N.\end{equation}
 Since $\dlim_{i \to \infty}f_i = f$ in $D,$ we can find $i_0 \in \N$ such that
\begin{equation}\label{tag2}
f_i(a) - f(a) \in  3^{-1}V,\quad \; \forall i \ge i_0.\end{equation}
By the continuity of $f,$ there exists a neighbourhood $U_a^2$ of $a$ in $D$ such that
\begin{equation}\label{tag3}
f(a) - f(z) \in  3^{-1}V,\quad \forall z \in U_a^2.\end{equation}
From (\ref{tag1}), (\ref{tag2}) and (\ref{tag3}), for all $z \in U_a = U_a^1 \cap U_a^2$ for all $i \ge i_0$ we have
\begin{equation}\label{tag4}
f_i(z) - f(z) \in V.\end{equation}

The proof of the proposition is complete. \end{proof}
 
We now can prove  Theorem \ref{thm_8} as follows.

\vskip0.2cm
\noindent
\textit{Proof of  Theorem \ref{thm_8}.} \ 
  As in the proof of Theorem \ref{thm_6}, for each $\alpha \ge 1,$ define the continuous linear map $T_\alpha: F'_{\rm bor} \to H(0_{\C^n})$ given by
$$ T_\alpha(u) = u \circ f_\alpha, \quad u \in F'_{\rm bor}. $$
By Theorem 3.5 in \cite{LH}, the sequence $(T_\alpha(u))_{\alpha \ge 1}$ converges in $H(0_{\C^n})$ for every $u \in F'_{\rm bor}.$ Since $F'_{\rm bor}$ is barrelled (see \cite[13.4.2]{Ja}) it follows that the sequence $(T_\alpha)_{\alpha \ge 1}$ is equicontinuous in $L(F'_{\rm bor}, H(0_{\C^n}))$ equipped with the strong topology. As in the proof of Theorem \ref{thm_6}, by Proposition 1.8 in \cite{BG} we deduce that there exists a neighbourhood $U$ of $0 \in F'_{\rm bor}$ such that 
$$ \bigcup_{\alpha \ge 1}T_\alpha(U) $$
is bounded in $H(0_{\C^n}).$ By the regularity of $H(0_{\C^n}),$ we can find $r \in (0, r_0)$ such that $\bigcup_{\alpha \ge 1}T_\alpha(U)$ is contained and bounded in $H^\infty(\Delta_n(r)).$ This yields that $(f_\alpha)_{\alpha \ge 1}$ is contained and bounded in $H^\infty(\Delta_n(r), F).$ Since for each $z \in \Delta_n(r)$ the sequence  $(f_{\alpha}\big|_{\ell_z})_{\alpha \ge 1}$ is convergent in $\Delta_1(r_0) \subset \ell_z,$ by Remark \ref{rmk1}, the sequence $(f_\alpha(z))_{\alpha \ge 1}$ is convergent for every $z \in \Delta_n(r).$ On the other hand, because $(f_{\alpha})_{\alpha \ge 1}$ is  bounded  in $H^\infty(\Delta_n(r), F),$ by Proposition \ref{prop_ext1} it follows that the sequence $(f_\alpha)_{\alpha \ge 1}$ is convergent in $H(\Delta_n(r), F).$
\hfill$\square $

\section{Discussion and  open question.}
  From Proposition 3.1 in \cite{LH} it is clear that, in $\C^n,$ the following are equivalent:
\begin{itemize}
\item[a)] $A$ is projectively pluripolar;
\item[b)] $A^\lambda := \{tz:\ t \in \C, |t| < \lambda, z \in A\}$ is  pluripolar for each $\lambda > 0;$
\item[c)] $\mu(A^\lambda) = 0$ where $\mu$ is the Lebesgue measure;
\item[d)] $\nu(\varrho(A^\lambda)) = 0$ where $\nu$ is the invariant measure  on the projective space $\C\P^{n - 1}$ and $\varrho: \C^n \setminus \{0\} \to \C\P^{n - 1}$ is the natural projection.
\end{itemize}
 
Thus, we can restate Theorem \ref{thm_8} in an alternative form as follows to obtain  an extension of Hartogs' result (cf. \cite[Corollary 6.3]{Al}) that is an immediate consequence Alexander's theorem   from the scalar case to the Fr\'{e}chet-valued one. 
\begin{thm} Let $(f_{\alpha})_{\alpha \ge 1}$ be a sequence of Fr\'{e}chet-valued holomorphic functions on $\Delta_n \subset \C^n.$  Let $B$ be a  subset of $\Delta_n$ such that $\nu(\varrho(B)) = 0$ where $\nu$ is the invariant measure  on the projective space $\C\P^{n - 1}$ and $\varrho: \C^n \setminus \{0\} \to \C\P^{n - 1}$ is the natural projection. Assume that for some $r_0 \in (0, 1),$ the restriction of the sequence $(f_\alpha)_{\alpha \ge 1}$ on each complex line $\ell$ through $0 \in \Delta_n$ with $\ell \cap B = \{0\}$ is convergent in $\Delta_1(r_0).$ Then  $(f_\alpha)_{\alpha \ge 1}$  is the sequence of holomorphic functions which converges on a zero-neighbourhood in $\Delta_n.$
\end{thm}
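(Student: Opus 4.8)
The plan is to obtain this statement directly from Theorem \ref{thm_8}, using the equivalence (a)$\,\Leftrightarrow\,$(d) stated above (i.e.\ Proposition 3.1 in \cite{LH}) to translate the measure hypothesis on $B$ into non-projective-pluripolarity of a suitable set of directions. First I would isolate the exceptional directions by setting
$$ S := \varrho(B \setminus \{0\}) \subset \C\P^{n-1}, \qquad A := \{a \in \C^n:\ \|a\| = 1,\ \varrho(a) \notin S\}. $$
By hypothesis $\nu(S) = 0.$ If $a \in A$ and $z \in \ell_a \cap B$ with $z \ne 0,$ then $\varrho(z) = \varrho(a) \in S,$ which is impossible; hence $\ell_a \cap B \subseteq \{0\}$ for every $a \in A.$ Since $\|a\| = 1$ and $r_0 < 1,$ the disk $\Delta(r_0)$ (parametrised by $t \mapsto ta$) lies in $\ell_a \cap \Delta_n,$ so the assumption on the restrictions gives that $(f_\alpha\big|_{\ell_a})_{\alpha \ge 1}$ converges on $\Delta(r_0)$ for every $a \in A.$

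Next I would check that $A$ is non-projectively-pluripolar in $\C^n.$ Because $\varrho(tz) = \varrho(z)$ for every $t \in \C \setminus \{0\},$ one has $\varrho(A^\lambda) = \varrho(A)$ for all $\lambda > 0,$ and the unit sphere surjects onto $\C\P^{n-1},$ so $\varrho(A) = \C\P^{n-1} \setminus S.$ Consequently $\nu(\varrho(A^\lambda)) = \nu(\C\P^{n-1}) - \nu(S) = \nu(\C\P^{n-1}) > 0,$ whence, by the equivalence (a)$\,\Leftrightarrow\,$(d), $A$ is not projectively pluripolar.

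Finally, each $f_\alpha$ is holomorphic on $\Delta_n,$ so its Taylor expansion at the origin, $f_\alpha = \sum_{k \ge 0} P_{\alpha,k}$ with $P_{\alpha,k}$ a continuous homogeneous polynomial of degree $k$ on $\C^n,$ exhibits $f_\alpha$ as a (convergent, in particular formal) power series of continuous homogeneous polynomials. With the set $A$ just built, the hypotheses of Theorem \ref{thm_8} hold, so there is $r \in (0, r_0)$ such that $(f_\alpha)_{\alpha \ge 1}$ converges on $\Delta_n(r);$ as $r < 1,$ the ball $\Delta_n(r)$ is a zero-neighbourhood contained in $\Delta_n,$ which is the asserted conclusion. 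The only step requiring an argument beyond citing Theorem \ref{thm_8} is the non-projective-pluripolarity of $A,$ handled completely by the measure computation above; so I expect no real obstacle here — this theorem is a reformulation of Theorem \ref{thm_8} via the dictionary of Proposition 3.1 in \cite{LH}.
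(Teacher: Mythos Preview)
Your proposal is correct and matches the paper's approach exactly: the paper does not give a separate proof of this theorem, but merely presents it as a restatement of Theorem~\ref{thm_8} via the equivalence (a)$\Leftrightarrow$(d) from Proposition~3.1 in \cite{LH}, and you have supplied precisely those details. The construction of the direction set $A$, the verification that $\ell_a\cap B\subseteq\{0\}$ for $a\in A$, and the measure computation showing $A$ is non-projectively-pluripolar are all straightforward and complete the reduction to Theorem~\ref{thm_8} as intended.
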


One question still unanswered is whether we can  obtain a truly generalization of Alexander's theorem (cf. \cite[Theorem 6.2]{Al}) for Frechet-valued version?
 In other word, whether Theorem \ref{thm_8}  is true or not if the uniform convergence of the family $(f_{\alpha})_{\alpha \ge 1}$ on the disk $\Delta_1(r_0) \subset \ell_a$  for each $a \in A$ is replaced by \textit{normality} of this family on  $\Delta_1(r_0)$?

\end{document}